\begin{document}

\newcommand{\EE}{\mathbb{E}}
\newcommand{\PP}{\mathbb{P}}
\newcommand{\RR}{\mathbb{R}}
\newcommand{\SM}{\mathbb{S}}
\newcommand{\ZZ}{\mathbb{Z}}
\newcommand{\ind}{\mathbf{1}}
\newcommand{\LL}{\mathbb{L}}
\def\F{{\cal F}}
\def\G{{\cal G}}
\def\P{{\cal P}}

\newtheorem{theorem}{Theorem}[section]
\newtheorem{lemma}[theorem]{Lemma}
\newtheorem{coro}[theorem]{Corollary}
\newtheorem{defn}[theorem]{Definition}
\newtheorem{assp}[theorem]{Assumption}
\newtheorem{cond}[theorem]{Condition}
\newtheorem{expl}[theorem]{Example}
\newtheorem{prop}[theorem]{Proposition}
\newtheorem{rmk}[theorem]{Remark}
\newtheorem{conj}[theorem]{Conjecture}

\newcommand\tq{{\scriptstyle{3\over 4 }\scriptstyle}}
\newcommand\qua{{\scriptstyle{1\over 4 }\scriptstyle}}
\newcommand\hf{{\textstyle{1\over 2 }\displaystyle}}
\newcommand\hhf{{\scriptstyle{1\over 2 }\scriptstyle}}
\newcommand\hei{\tfrac{1}{8}}

\newcommand{\eproof}{\indent\vrule height6pt width4pt depth1pt\hfil\par\medbreak}

\def\a{\alpha}
\def\e{\varepsilon} \def\z{\zeta} \def\y{\eta} \def\o{\theta}
\def\vo{\vartheta} \def\k{\kappa} \def\l{\lambda} \def\m{\mu} \def\n{\nu}
\def\x{\xi}  \def\r{\rho} \def\s{\sigma}
\def\p{\phi} \def\f{\varphi}   \def\w{\omega}
\def\q{\surd} \def\i{\bot} \def\h{\forall} \def\j{\emptyset}

\def\be{\beta} \def\de{\delta} \def\up{\upsilon} \def\eq{\equiv}
\def\ve{\vee} \def\we{\wedge}

\def\D{\Delta} \def\O{\Theta} \def\L{\Lambda}
\def\X{\Xi} \def\Si{\Sigma} \def\W{\Omega}
\def\M{\partial} \def\N{\nabla} \def\Ex{\exists} \def\K{\times}
\def\V{\bigvee} \def\U{\bigwedge}

\def\1{\oslash} \def\2{\oplus} \def\3{\otimes} \def\4{\ominus}
\def\5{\circ} \def\6{\odot} \def\7{\backslash} \def\8{\infty}
\def\9{\bigcap} \def\0{\bigcup} \def\+{\pm} \def\-{\mp}
\def\la{\langle} \def\ra{\rangle}

\def\proof{\noindent{\it Proof. }}
\def\tl{\tilde}
\def\trace{\hbox{\rm trace}}
\def\diag{\hbox{\rm diag}}
\def\for{\quad\hbox{for }}
\def\refer{\hangindent=0.3in\hangafter=1}

\newcommand\wD{\widehat{\D}}
\newcommand{\ka}{\kappa_{10}}

\title{Truncated Milstein method for non-autonomous stochastic differential equations and its modification}

\author{Juan Liao ~~Wei Liu\footnote{Corresponding author, Email: weiliu@shnu.edu.cn; lwbvb@hotmail.com}~~ Xiaoyan Wang\\ Department of Mathematics\\ Shanghai Normal University, Shanghai, 200234, China}
\date{}

\maketitle

\begin{abstract}
The truncated Milstein method, which was initially proposed in (Guo, Liu, Mao and Yue 2018), is extended to the non-autonomous stochastic differential equations with the super-linear state variable and the H\"older continuous time variable. The convergence rate is proved. Compared with the initial work, the requirements on the step-size is also significantly released. In addition, the technique of the randomized step-size is employed to raise the convergence rate of the truncated Milstein method.
\medskip  \par \noindent
{\small\bf Key words}: non-autonomous stochastic differential equations, truncated Milstein method, randomized step-size, super-linear state variable, H\"older continuous time variable. 
\end{abstract}

\section{Introduction}
Numerical methods for stochastic differential equations (SDEs) with super-linear coefficients have been attracting lots of attention in recent years. Due to that the classical Euler-Maruyama (EM) method fails to converge for those types of SDEs \cite{HJK2011}, different new methods have been proposed. 
\par
Implicit methods are natural alternatives, since they have been successful in handling the stiffness in ordinary differential equations (ODEs) \cite{HW2010}. The implicit methods of the Euler's type for SDEs were studied in \cite{BK2010,Hu1996,MS2013,WWD2020,YXT2017}. The Milstein-type implicit methods for SDEs were discussed in  \cite{HMS2013,KS2006,RKVZ2015,ZWX2018}. The multi-stage implicit methods were investigated in \cite{AH2017,BT2004,DR2009}. We just mention some of the works on implicit methods here and refer the readers to the references therein.
\par
Compared with implicit methods, explicit methods still have their advantages, such as  simple structures, easy to implement, cheap to simulate large numbers of paths \cite{Higham2011}. Recently, many different explicit methods have been proposed to approximate SDEs with super-linear coefficients. The tamed Euler method was initially proposed in \cite{HJK2012}. By using the idea of taming the coefficients, different types of tamed methods have been proposed \cite{DKS2016,GHW2020,NL2019,SLL2018,WG2013,ZM2017,ZWH2014}. The truncated EM method is another modification of the classical EM, which was initialized in \cite{Mao2015,Mao2016}. Afterwards, the truncating technique has been employed to develop different kinds of truncated methods \cite{DFLM2019,GLMY2017,JHL2018,LX2018,LMY2019,ZSL2018}.
\par
Most of the works mentioned above dealt with autonomous SDEs, where the time variable does not appear explicitly in the coefficients. Meanwhile, it is well-known that the non-smoothness of the time variable in non-autonomous SDEs leads to significant difference in the convergence rate of numerical methods for both ODEs \cite{Daun2011,Kacewicz1987} and SDEs \cite{KW2019,LMTW2020}. 
\par
Motivated by all the issues mentioned above, we investigate the truncated Milstein method for non-autonomous SDEs with the time variable satisfying H\"older's continuity and the state variable containing super-linear terms. The finite time convergence of the proposed method is proved and the convergence  rate is discussed. This result could be regarded as an extension of \cite{GLMY2018}, where autonomous SDEs were considered. We also propose the randomized truncated Milstein method to overcome the low convergence rate due to the H\"older continuous time variable.
\par
The main contribution of this paper are twofold.
\begin{itemize}
\item Compared with the existing work \cite{GLMY2018}, our results cover the non-autonomous case and release the requirements on the step-size significantly.
\item The randomized truncated Milstein method is proposed. By using numerical silumations, this new method is demonstrated to outperform the truncated Milstein method for non-autonomous SDEs.
\end{itemize}
\par
This paper is constructed in the following way. Notations, assumptions and the structure of the numerical methods are presented in Section 2. Section 3 contains the main results and their proofs. The randomized truncated Milstein method is proposed in Section 4. Numerical simulations are conducted in Section 5. Section 6 sees the conclusion and some discussions on the future research.

\section{Mathematical preliminary} 
Notations, assumptions and the truncated Milstein method for non-autonomous SDEs are introduced in the section.
\par
Through out this paper, unless otherwise specified, we let ($\Omega$, $\mathcal{F}$, $\mathbb{P}$) be a complete probability space with a filtration $\{\mathcal{F}_t\}_{t\geq 0}$ satisfying the usual condition (that is, it is right continuous and increasing while $\mathcal{F}_0$ contains all p-null sets). Let $B(t)$ be an one-dimensional Brownian motion defined in the probability space and is $\F_t$-adopted. And let $\vert \cdot \vert$ denote both 
the Euclidean norm in $\mathbb{R}^n$ and the trace norm in $\mathbb{R}^{n\times m}$; Moreover, for two real numbers a and b, we use $a \lor b=\max(a,b)$ and $a \land b=\min(a,b)$. For a given set G, its indicator function is denoted by $I_G$, namely $I_G(x)=1$ if $x\in G$ and 0 otherwise.
\par
We are concerned with the d-dimension SDEs

\begin{equation}
dy(t)=\mu (t,y(t))dt+\sigma (t,y(t))dB(t)
\label{SDE}
\end{equation}\\
\textcolor{blue}{on $t\in[t_0,T]$ for any $T>t_0$ with the initial value $y(t_0)=y_0\in \mathbb{R}^d$,} where the drift coefficient function $\mu :[t_0,T]\times \mathbb{R}^d\to \mathbb{R}^d
$  and the diffusion coefficient function $\sigma :[t_0,T]\times \mathbb{R}^d\to \mathbb{R}^d
,$ and $y(t)=(y^1(t),y^2(t),...,y^d(t))^T.$\par
We define:
\begin{equation*}
{L\sigma(t,y)=\sum_{l=1}^d\sigma ^l(t,y)\frac {\partial\sigma(t,y)}{ \partial y^l}},
\end{equation*}
where $\sigma=(\sigma^1,\sigma^2,...,\sigma^d)^T,\quad\sigma^l:\mathbb{R}^d\to\mathbb{R}.$\\ 
And define the derivative of vector $\sigma(t,y)$ with respect to $y^l$ by\\
\begin{equation*}
G^l(t,y):=\left(\frac {\partial\sigma^1\left(t,y\right)}{ \partial y^l},\frac {\partial\sigma^2\left(t,y\right)}{ \partial y^l},...,\frac {\partial\sigma^d\left(t,y\right)}{ \partial y^l}\right).
\end{equation*}\\
Moreover, we assume that both $\sigma$ \textcolor{blue}{and} $\mu$ have \textcolor{blue}{a} second-order derivative, and we make the following assumptions.\\
\begin{assp}\label{A1}
There exist constants $C_1>0$ \textcolor{blue}{,} $\beta>0$ \textcolor{blue}{and $\a\in(0,1]$} such that\\
\begin{equation*}
\vert\mu(t,x)-\mu(t,y)\vert\vee\vert\sigma(t,x)-\sigma(t,y)\vert\vee\vert L\sigma(t,x)-L\sigma(t,y)\vert\leq C_1(1+\vert x\vert ^{\beta}+\vert y\vert ^{\beta})\vert x-y\vert,
\end{equation*}
\begin{equation*}
\textcolor{blue}{\vert\mu(t_1,y)-\mu(t_2,y)\vert\vee\vert\sigma(t_1,y)-\sigma(t_2,y)\vert\leq  C_4(1+\vert y\vert ^{\beta+1})\vert t_1-t_2\vert^\alpha,}
\end{equation*}
for all $x,y\in\mathbb{R}^d$\textcolor{blue}{, any $t\in(t_0,T]$.}
\end{assp}
\begin{assp}\label{A2}
There exist constants $q\geq2$ and $ C_2>0 $ such that
\begin{equation*}
\langle x-y,\mu(t,x)-\mu(t,y)\rangle+\textcolor{blue}{(q-1)}\vert\sigma(t,x)-\sigma(t,y)\vert^2\leq C_2\vert x-y\vert^2,
\end{equation*}
for all $x,y\in \mathbb{R}^d$\textcolor{blue}{, any $t\in(t_0,T]$.}
\end{assp}
\begin{assp}\label{A3}
There exist constants $p\geq2$ and $C_3>0$ such that 
\begin{equation*}
\langle y,\mu(t,y)\rangle+(p-1)\vert\sigma(t,y)\vert^2\leq C_3(1+\vert y\vert^2),
\end{equation*}
\textcolor{blue}{where $C_3$ does not depend on $t$ or $y$, for all $t\in[t_0,T]$, any $y\in\RR^{d}$.}
\end{assp}
\textcolor{blue}{Assumptions \ref{A1} and \ref{A2} guarantee a unique global solution of SDE \eqref{SDE}.}  In addition, we can derive the boundedness of the moment of the \textcolor{blue}{true} solution from Assumption \ref{A2} \textcolor{blue}{which is proved in \cite{Mao2007}}, that is, there exists a constant $M_1$, which is dependent on $t$ and $q$, such that
\begin{equation}\label{YJ}
E\vert y(t)\vert^q\leq M_1(1+\vert y(0)\vert^q).
\end{equation} \\
And it can be observed from Assumption \ref{A1} that for all $y\in\mathbb{R}^d$ and $t\in[t_0,T]$
\begin{equation}\label{JDZ}
\vert\mu(t,y)\vert\vee\vert\sigma(t,y)\vert\vee\vert L\sigma(t,y)\vert\leq M_2(1+\vert y\vert ^{\beta+1})
\end{equation}
where $M_2$ depends on $C_1$ and $\sup\limits_{t_0\leq t\leq T}\left(\vert\mu(t,0)\vert +\vert\sigma(t,0)\vert\right).$\\
We further assume that there exists a positive constant $M_3$ such that
\begin{equation}\label{DSYJ}
\vert\frac{\partial\mu(t,y)}{\partial y}\vert\vee\vert\frac{\partial^2\mu(t,y)}{\partial y^2}\vert\vee\vert\frac{\partial\sigma(t,y)}{\partial y}\vert\vee\vert\frac{\partial^2\sigma(t,y)}{\partial y^2}\vert\leq M_3(1+\vert y\vert ^{\beta+1}),
\end{equation} \\
\textcolor{blue}{for all $y\in\mathbb{R}^d$ and $t\in[t_0,T]$.}\\
To make the paper self-contained, let us revisit the truncated Milstein method. Firstly, we choose a strictly increasing function $f:\mathbb{R}_+\to\mathbb{R}_+$ \textcolor{blue}{which is the set of all non-negative real numbers} such that $f(u)\to \infty $ as $u\to\infty$ and\\
\begin{equation}\label{supsup}
\sup\limits_{t\in[t_0,T]}\sup\limits_{\vert y\vert\leq u}\left(\vert\mu(t,y)\vert \vee\vert\sigma(t,y)\vert\vee\vert \textcolor{blue}{G^l(t,y)}\vert\right)\leq f(u),\;\forall u\geq1,\textcolor{blue}{\;l=1,2,...,d.}
\end{equation}\\
Then we use $f^{-1}$ denote the inverse function of $f$. we can easily observe that $f^{-1}$ is also a strictly increasing continuous function from $[\textcolor{blue}{f(1)},\infty)$ to $\mathbb{R}_+.$ We choose a constant $\hat{h}\geq1\lor\textcolor{blue}{f(1)}$ and a strictly decreasing function $h:(0,1]\to [\textcolor{blue}{f(1)},\infty)$ such that
\begin{equation}\label{TJ}
\lim_{\Delta\to 0}h(\Delta)=\infty\quad and\quad\Delta^{\frac{1}{4}}h(\Delta)\leq\hat{h},\quad\forall\Delta\in(0,1].
\end{equation}\\
For a given step-size $\Delta\in(0,1]$\textcolor{blue}{, any $t\in[t_0,T]$ and all $y\in\RR^d$},  define the truncated functions by\\
\begin{equation}\label{sigma}
\sigma_{\Delta}(t,y)=\sigma\left(t,\left(\vert y\vert\land f^{-1}\left(h\left(\Delta\right)\right)\right)\frac{y}{\vert y\vert}\right),
\end{equation}
\begin{equation}\label{mu}
\mu_{\Delta}(t,y)=\mu\left(t,\left(\vert y\vert\land f^{-1}\left(h\left(\Delta\right)\right)\right)\frac{y}{\vert y\vert}\right),
\end{equation}
\begin{equation}\label{G}
G_{\Delta}^{\textcolor{blue}{l}}(t,y)=G^{\textcolor{blue}{l}}\left(t,\left(\vert y\vert\land f^{-1}\left(h\left(\Delta\right)\right)\right)\frac{y}{\vert y\vert}\right),\;\textcolor{blue}{l=1,2,...,d},
\end{equation}\\
where we set $y/\vert y\vert=0$ if $y=0$. \\
It is clear that \textcolor{blue}{for all $t\in[t_0,T]$, any $y\in\RR^d$ and $l=1,2,...,d$,}\\
\begin{equation}\label{sigmamuGDelta}
\vert\sigma_{\Delta}(t,y)\vert\vee\vert\mu_{\Delta}(t,y)\vert\vee\vert G_{\Delta}^{\textcolor{blue}{l}}(t,y)\vert\leq f\left(f^{-1}\left(h\left(\Delta\right)\right)\right)=h(\Delta).
\end{equation}\\
We can also obtain the fact that there exists a positive constant $M$ such that 
\begin{equation}\label{Daoshu2}
\vert\frac{\partial \mu_{\Delta}(t,y)}{\partial y}\vert\vee\vert\frac{\partial^2\mu_{\Delta}(t,y)}{\partial y^2}\vert\vee\vert\frac{\partial \sigma_{\Delta}(t,y)}{\partial y}\vert\vee\vert\frac{\partial^2\sigma_{\Delta}(t,y)}{\partial y^2}\vert\leq M,
\end{equation}\\
for any $t\in[t_0,T]$ and $y\in \RR^d$.\\
\begin{rmk}
\textcolor{blue}{Since the truncated functions, $\mu_\Delta(t,y)$, $\s_\Delta(t,y)$ and $\G^l_\Delta$, are not differential at points, $f^{-1}(h(\D))$ and -$f^{-1}(h(\D))$, here we mean the derivatives of the truncated functions  at $f^{-1}(h(\D))$ by their left derivatives and the derivatives at $-f^{-1}(h(\D))$ by their right derivatives.}
\end{rmk}
The discrete-time truncated \textcolor{blue}{M}ilstein numerical solution $X_i$, to approximate $y(t_i)$ for $t_i=i\Delta+t_0$, are formed by setting $X_0=y_0$ and computing 
\begin{equation}\label{discreteMilstein}
\begin{split}
X_{i+1}=X_{i}+\mu_{\D}(t\textcolor{blue}{_i},X_i)\Delta+\sigma_{\Delta}(t\textcolor{blue}{_i},X_i)\Delta B_i+\frac{1}{2}\sum_{l=1}^d\s_{\D}^l(t\textcolor{blue}{_i},X_i){G}_{\D}^l(t\textcolor{blue}{_i},X_i)(\D {B_i}^2-\Delta),
\end{split}
\end{equation}
for $i=0,1,...N,$ where $N$ is the integer part of $T/\Delta$ and let $t_{N+1}=T$ while $\Delta B_i=B(t_{i+1})-B(t_{i})$.\\
To simplify the notation, we set   
\begin{equation*}
L\s_{\D}(t,X_i):=\sum_{l=1}^d\s_{\D}^l(t,X_i)G_{\D}^l(t,X_i)\textcolor{blue}{,}
\end{equation*}\\
\textcolor{blue}{for all $t\in[t_0,T]$ and $l\in\{1,2,...,d\}$}.\\
The continuous version of the truncated \textcolor{blue}{M}ilstein method is defined by
\begin{equation}\label{continuousvertion}
\begin{split}
X\textcolor{blue}{_{\D}}(t)=&\bar{X}\textcolor{blue}{_{\D}}(t)+\int^t_{\textcolor{blue}{t_i}}\textcolor{blue}{\mu_{\D}}\left(\kappa\left(s\right),\bar{X}\textcolor{blue}{_{\D}}\left(s\right)\right)ds+\int_{\textcolor{blue}{t_i}}^t\s_{\D}\left(\kappa\left(s\right),\bar{X}\textcolor{blue}{_{\D}}\left(s\right)\right)dB(s) \\
&+\int_{\textcolor{blue}{t_i}}^tL\s_{\D}\left(\kappa\left(s\right),\bar{X}\textcolor{blue}{_{\D}}\left(s\right)\right)\Delta B(s)dB(s),
\end{split}
\end{equation}
\textcolor{blue}{where $\bar{X}_{\D}(t)$ is a piecewise constant solution such that  $\bar{X}\textcolor{blue}{_{\D}}(t)=\bar{X}_{\D}(t_i)=X_{\D}(t_i)=X_i$, for $t_i\leq t<t_{i+1}$ and we define $\bar{X}_{\D}(T)=X_{\D}(T)$, $\D B(s)=\sum_{i=0}^{N}I_{\{t_i\leq s<t_{i+1}\}}(B(s)-B(t_i))$ and $\kappa(s)=t_iI_{\{t_i\leq s<t_{i+1}\}}$.}
\par
We need the following version of \textcolor{blue}{the} Taylor expansion.
\par
If a function $\phi:\mathbb{R}_+\times\mathbb{R}^d\to\mathbb{R}^d$ is \textcolor{blue}{third-order continuous} differentiable, by \textcolor{blue}{the} Taylor formula we have:
\begin{equation}\label{Secthreedingyi}
\phi(\kappa(t),x)-\phi(\kappa(t),x^*)=\phi'\left(\kappa(t),x\right)\Big|_{x=x^*}(x-x^*)+\textcolor{blue}{R_{\phi}(t,x,x^*)},
\end{equation}
where $\textcolor{blue}{R_{\phi}(t,x,x^*)}=\int_0^1(1-\tau)\textcolor{blue}{\phi''\left(\kappa(t),x\right)\big|_{x=x^*+\tau(x-x^*)}}(x-x^*,x-x^*)d\tau$, for any fixed $t\in[t_0,T].$\\
For any $y,j_1,j_2\in\mathbb{R}^d,$ the expressions of the derivatives are as follows:
\begin{equation*}
\textcolor{blue}{\phi'(\kappa(t),y)(j_1)}=\sum_{i=1}^d\frac{\partial \phi}{\partial {y^i}}j^i_1,\qquad 
\textcolor{blue}{\phi'(\kappa(t),y)(j_1,j_2)}=\sum_{i,j=1}^d\frac{\partial^2 \phi}{\partial {y^i}\partial{y^j}}j^i_1j^j_2,
\end{equation*}
where $\frac{\partial \phi}{\partial {y^i}}=\Big(\frac{\partial \phi_1}{\partial {y^i}},...,\frac{\partial \phi_d}{\partial {y^i}}\Big)$ and $\phi=(\phi_1,\phi_2,...\phi_d)$,  \textcolor{blue}{for any fixed $t\in[t_0,T]$.}\\
If we replace $x$ and $x^*$ by $X\textcolor{blue}{_\D}(t)$ and $\bar{X}\textcolor{blue}{_\D}(t)$ respectively from \eqref{Secthreedingyi}, \textcolor{blue}{for any fixed $t\in[t_0,T]$,} we have:
\begin{equation}
\label{phi}
\begin{split}
\phi(\kappa(t),X\textcolor{blue}{_\D}(t))-\phi(\kappa(t),\bar{X}\textcolor{blue}{_\D}(t))=&\textcolor{blue}{\phi'\left(\kappa(t),x\right)\Big|_{x=\bar{X}_{\D}(t)}}\int_{t_i}^t\s_{\D}(\kappa(s),\bar{X}\textcolor{blue}{_\D}(s))dB(s)\\
&+\textcolor{blue}{\tilde{R}_{\phi}(t,X_{\D}(t),\bar{X}_{\D}(t))}.
\end{split}
\end{equation}
Here,
\begin{equation}\label{secthree2}
\begin{split}
\textcolor{blue}{\tilde{R}_{\phi}(t,X_{\D}(t),\bar{X}_{\D}(t))}=&\textcolor{blue}{\phi'\left(\kappa(t),x\right)\Big|_{x=\bar{X}_{\D}(t)}}\bigg(\int_{t_i}^t\mu_{\D}(\kappa(s),\bar{X}\textcolor{blue}{_\D}(s))ds\\
&+\int_{t_i}^tL\s_{\D}(\kappa(s),\bar{X}\textcolor{blue}{_\D}(s))\Delta B(s)dB(s)\bigg)+\textcolor{blue}{R_{\phi}(t,X_{\D}(t),\bar{X}_{\D}(t))}.
\end{split}
\end{equation}
Thus, replacing $\phi$ by $\sigma_{\Delta}
$ from \eqref{phi}, we obtain
\begin{equation}\label{secthree3}
\textcolor{blue}{R_{\s_{\D}}(t,X_{\D}(t),\bar{X}_{\D}(t))}=\sigma_{\Delta}(\kappa(t),X\textcolor{blue}{_\D}(t))-\sigma_{\Delta}(\kappa(t),\bar{X}\textcolor{blue}{_\D}(t))-L\sigma_{\Delta}(\kappa(t),\bar{X}\textcolor{blue}{_\D}(t))\Delta B(t),
\end{equation}
where $\D B(t)=B(t)-B(t_i),$ for $t_i\leq t< t_{i+1}.$

\section{Main results}
This section is divided into three parts. The main theorems of this paper and the comparison with the existing result are presented in Section 3.1. Important lemmas are proved in Section 3.2. The proofs of the main theorems are postponed to Section 3.3.
\subsection{Main theorem}
\begin{theorem}\label{theo34}
	Let Assumptions \ref{A1}, \ref{A2} \textcolor{blue}{and} \ref{A3} hold and assume that $p>2(1+\beta)q$, then for any $\bar{q}\in [2,q)$ and $\Delta\in(0,1]$\textcolor{blue}{, there exists a constant $H$ such that} 
	\begin{equation*}
	\textcolor{blue}{\sup\limits_{t_0\leq t\leq T}\EE|y(t)-X\textcolor{blue}{_\D}(t)|^{\bar{q}}}
	\leq H\Big(\Delta^{\alpha \bar{q}}+\Delta^{\bar{q}}\big(h(\Delta)\big)^{2\bar{q}}+\Big(f^{-1}\big(h(\Delta)\big)\Big)^{(\beta+1)\bar{q}-p}\Big)
	\end{equation*}
\end{theorem}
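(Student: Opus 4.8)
The plan is to derive the estimate from It\^o's formula and Gronwall's inequality applied to $t\mapsto\EE|e(t)|^{\bar q}$, where $e(t):=y(t)-X_{\D}(t)$, after two preparatory steps (extending to the non-autonomous setting the argument of \cite{GLMY2018}). First I would establish the moment bounds $\sup_{0<\D\le1}\sup_{t_0\le t\le T}\big(\EE|X_{\D}(t)|^{p}\vee\EE|\bar X_{\D}(t)|^{p}\big)<\infty$ and $\sup_{t_0\le t\le T}\EE|y(t)|^{p}<\infty$: the bound for $X_{\D}$ follows by applying It\^o's formula to $|X_{\D}(t)|^{p}$ along the continuous version \eqref{continuousvertion}, observing that the Khasminskii-type condition in Assumption \ref{A3} is inherited, with only the constant changed, by the truncated coefficients $\mu_{\D},\s_{\D}$ (truncation being a radial projection onto the ball of radius $f^{-1}(h(\D))$), and controlling the extra Milstein integral $\int L\s_{\D}\,\D B\,dB$ through \eqref{sigmamuGDelta} together with the constraint $\D^{1/4}h(\D)\le\hat h$ in \eqref{TJ}, which makes $\D(h(\D))^{2}\le\hat h^{2}\D^{1/2}$; the bound for $y$ is classical under Assumption \ref{A3} and complements \eqref{YJ}. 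Second I would prove the one-step bound $\EE|X_{\D}(t)-\bar X_{\D}(t)|^{r}\le C_{r}\big(\D^{1/2}h(\D)\big)^{r}$ for each $r\ge2$ by estimating the three integrals of \eqref{continuousvertion} over $[t_i,t]$ by means of \eqref{sigmamuGDelta} and the Burkholder--Davis--Gundy inequality, which together with the increment estimate $\EE|y(s)-y(t_i)|^{r}\le C_{r}\D^{r/2}$ gives $\EE|e(s)-e(t_i)|^{r}\le C_{r}\big(\D^{1/2}h(\D)\big)^{r}$.

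Next, using the Taylor expansions \eqref{phi}--\eqref{secthree3} (with $\phi=\s_{\D}$ and with $\phi=\mu_{\D}$), I would rewrite the error equation so that the diffusion coefficient of $e$ equals $\big[\s(t,y)-\s(t,X_{\D})\big]+\big[\s(t,X_{\D})-\s(\k(s),X_{\D})\big]+\big[\s(\k(s),X_{\D})-\s_{\D}(\k(s),X_{\D})\big]+R_{\s_{\D}}$, and its drift splits into $\big[\mu(t,y)-\mu(t,X_{\D})\big]$, a frozen-time increment, a truncation increment, a first-order Taylor term $\mu_{\D}'(\k(s),\cdot)|_{\bar X_{\D}(s)}\int_{t_i}^{s}\s_{\D}(\k(u),\bar X_{\D}(u))dB(u)$, and a second-order remainder $\tilde R_{\mu_{\D}}$. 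Applying It\^o's formula to $|e(t)|^{\bar q}$ (a localising sequence removing the $dB$-martingale term, which is legitimate by the moment bounds) I would estimate the terms by groups: $\langle e,\s(t,y)-\s(t,X_{\D})\rangle$ paired with $|\s(t,y)-\s(t,X_{\D})|^{2}$ is absorbed, via Assumption \ref{A2} and the slack provided by $\bar q<q$, into $C\int_{t_0}^{t}\EE|e(s)|^{\bar q}ds$; the frozen-time increments are controlled by the H\"older-in-time part of Assumption \ref{A1} and $|t-\k(s)|\le\D$, giving $C\D^{\a\bar q}$; the truncation increments vanish on $\{|X_{\D}(s)|\le f^{-1}(h(\D))\}$ and on the complement are dominated, by \eqref{JDZ}, by $C(1+|X_{\D}(s)|^{\be+1})$, so that with $R:=f^{-1}(h(\D))$ one obtains $\EE\big[(1+|X_{\D}(s)|^{\be+1})^{\bar q}\ind_{|X_{\D}(s)|>R}\big]\le R^{(\be+1)\bar q-p}\,\EE|X_{\D}(s)|^{p}\le CR^{(\be+1)\bar q-p}$, where $p>2(1+\be)q$ guarantees $(\be+1)\bar q-p<0$ with enough integrability margin for the H\"older splittings in which the polynomially growing coefficients appear (in particular squared, through $|\s-\s_{\D}|^{2}$ in It\^o's formula); finally $R_{\s_{\D}}$ and $\tilde R_{\mu_{\D}}$ are of size $|X_{\D}(s)-\bar X_{\D}(s)|^{2}$ plus terms of type $\D h(\D)$ and $\int L\s_{\D}\,\D B\,dB$, hence by the one-step bound contribute $C\D^{\bar q}(h(\D))^{2\bar q}$.

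The main difficulty is the first-order Taylor term $\mu_{\D}'(\k(s),\cdot)|_{\bar X_{\D}(s)}\int_{t_i}^{s}\s_{\D}(\k(u),\bar X_{\D}(u))dB(u)$ in the drift (and the analogous cross term in $|\cdot|^{2}$ of the diffusion): being only of order $\D^{1/2}h(\D)$, estimated crudely it would produce the inadmissible term $\D^{\bar q/2}(h(\D))^{\bar q}$, so one must exploit the martingale structure. Because the expansion \eqref{phi} is taken at the frozen time $\k(s)=t_i$, the factor $\mu_{\D}'(t_i,\cdot)|_{\bar X_{\D}(s)}\s_{\D}(t_i,\bar X_{\D}(s))$ is $\F_{t_i}$-measurable and $\int_{t_i}^{s}\s_{\D}(t_i,\bar X_{\D}(s))dB(u)=\s_{\D}(t_i,\bar X_{\D}(s))(B(s)-B(t_i))$ has vanishing $\F_{t_i}$-conditional expectation, so on each subinterval replacing $|e(s)|^{\bar q-2}e(s)$ by $|e(t_i)|^{\bar q-2}e(t_i)$ makes the corresponding integrand integrate to zero; the residue is estimated by the elementary inequality $\left||a|^{\bar q-2}a-|b|^{\bar q-2}b\right|\le(\bar q-1)(|a|\vee|b|)^{\bar q-2}|a-b|$ with $a=e(s)$, $b=e(t_i)$, which, combined with H\"older's inequality, the moment bounds, and $\EE|e(s)-e(t_i)|^{r}\le C_{r}(\D^{1/2}h(\D))^{r}$, shows that after summation over the $O(\D^{-1})$ subintervals it is subsumed into $C\int_{t_0}^{t}\EE|e(s)|^{\bar q}ds+C\D^{\bar q}(h(\D))^{2\bar q}$. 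Collecting all contributions yields $\EE|e(t)|^{\bar q}\le C\int_{t_0}^{t}\EE|e(s)|^{\bar q}ds+C\big(\D^{\a\bar q}+\D^{\bar q}(h(\D))^{2\bar q}+(f^{-1}(h(\D)))^{(\be+1)\bar q-p}\big)$ for every $t\in[t_0,T]$, and Gronwall's inequality gives the asserted bound. I expect this martingale-cancellation step, performed jointly with the non-autonomous frozen-time decomposition and the polynomial-growth truncation estimates, to be the most delicate part of the proof.
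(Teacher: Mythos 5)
Your proposal follows essentially the same route as the paper's proof: the same splitting of the drift and diffusion differences into an exact-difference part absorbed via Assumption \ref{A2} (with the slack $\bar q<q$ through a Young-inequality weighting), a frozen-time increment controlled by the H\"older condition in Assumption \ref{A1}, a truncation increment bounded by a Chebyshev-type argument yielding $(f^{-1}(h(\D)))^{(\be+1)\bar q-p}$, and Taylor remainders of size $\D^{\bar q}(h(\D))^{2\bar q}$; the supporting moment and one-step lemmas you list are exactly Lemmas \ref{Exandexbaryj}--\ref{lemma33}, and your martingale-cancellation treatment of the first-order Taylor term is the same device the paper invokes (via the reference to (3.35) in \cite{WG2013}) to bound $I_{11}$ by $\D^{\bar q}$. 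The argument is correct and matches the paper's proof in all essential respects.
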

To see the convergence rate more clearly, we strength the requirement Assumption \ref{A3} and obtained the following result.
\begin{theorem}\label{th35}
	Let Assumptions \ref{A1} \textcolor{blue}{and} \ref{A2}  hold, and Assumption \ref{A3} hold for any \textcolor{blue}{$p>2(\be+1)q$}. Then for any \textcolor{blue}{$\bar{q}\in[2,q)$},  $\e\in(0,1/4]$ and $\D\in(0,1],$ \textcolor{blue}{there exists a constant $H$ such that}
	\begin{equation*}
	\textcolor{blue}{\sup\limits_{t_0\leq t\leq T}\EE|y(t)-X\textcolor{blue}{_\D}(t)|^{\bar{q}}}\leq H\left(\D^{\min(1-2\e,\a)\bar{q}}\right)
	\end{equation*}
\end{theorem}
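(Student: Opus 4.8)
The plan is to derive Theorem~\ref{th35} as a corollary of Theorem~\ref{theo34}: one commits to concrete choices of the auxiliary functions $f,h$ and of the moment exponent $p$, and then checks that each of the three terms in the bound of Theorem~\ref{theo34} is at most a constant multiple of $\D^{\min(1-2\e,\alpha)\bar q}$. The room to do this is provided by the hypothesis, which grants Assumption~\ref{A3} for \emph{every} $p>2(\beta+1)q$, not merely for a single fixed $p$.

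First I would fix the auxiliary data. By \eqref{JDZ} (for $\mu,\sigma$) and \eqref{DSYJ} (for $\partial\sigma/\partial y$, of which each $G^l$ is a column), the supremum in \eqref{supsup} is bounded by $C(1+|y|^{\beta+1})$ with $C$ depending only on $M_2,M_3,d$; hence one may take $f(u)=(C+1)(1+u^{\beta+1})$, which is strictly increasing on $\RR_+$, diverges at $\infty$, satisfies \eqref{supsup}, and whose inverse obeys $f^{-1}(v)\ge c\,v^{1/(\beta+1)}$ for some $c>0$ and all $v\ge f(1)$. Next set $\hat h:=1\vee f(1)$ and $h(\D):=\hat h\,\D^{-\e}$; since $\e\in(0,1/4]$ one checks immediately that $h$ maps $(0,1]$ into $[f(1),\infty)$, is strictly decreasing, tends to $\infty$ as $\D\to0$, and satisfies $\D^{1/4}h(\D)=\hat h\,\D^{\,1/4-\e}\le\hat h$, so \eqref{TJ} holds and the scheme is well defined. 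Finally, since Assumption~\ref{A3} is available for all $p>2(\beta+1)q$, fix $p$ so large that both $p>2(\beta+1)q$ (so that Theorem~\ref{theo34} applies) and
\[
\frac{\e\bigl(p-(\beta+1)\bar q\bigr)}{\beta+1}\ \ge\ \bar q\ \ge\ \min(1-2\e,\alpha)\,\bar q .
\]

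Inserting $(f,h,p)$ into Theorem~\ref{theo34}, I would bound the three terms one by one. Since $\D\in(0,1]$, the first term satisfies $\D^{\alpha\bar q}\le\D^{\min(1-2\e,\alpha)\bar q}$. The second term equals $\D^{\bar q}\bigl(h(\D)\bigr)^{2\bar q}=\hat h^{2\bar q}\,\D^{(1-2\e)\bar q}\le\hat h^{2\bar q}\,\D^{\min(1-2\e,\alpha)\bar q}$. For the third term, $(\beta+1)\bar q-p<0$, so $x\mapsto x^{(\beta+1)\bar q-p}$ is decreasing and the lower bound $f^{-1}(h(\D))\ge c\,\hat h^{1/(\beta+1)}\D^{-\e/(\beta+1)}$ gives
\[
\Bigl(f^{-1}\bigl(h(\D)\bigr)\Bigr)^{(\beta+1)\bar q-p}\ \le\ \bigl(c\,\hat h^{1/(\beta+1)}\bigr)^{(\beta+1)\bar q-p}\,\D^{\frac{\e(p-(\beta+1)\bar q)}{\beta+1}}\ \le\ H'\,\D^{\min(1-2\e,\alpha)\bar q}
\]
by the choice of $p$. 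Adding the three estimates yields the theorem, with $H$ assembled from the constants above and from the constant in Theorem~\ref{theo34}; it depends on $T,t_0,y_0,\bar q,\e,\alpha,\beta,q$ and on the constants in Assumptions~\ref{A1}--\ref{A3}, but not on $\D$.

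Because the argument is essentially a specialization of Theorem~\ref{theo34}, there is no genuinely hard analytic step; the one point needing care is the inverse-function (third) term, whose control forces $p$ to be taken large relative to $1/\e$ — which is exactly why the statement strengthens Assumption~\ref{A3} to hold for every $p>2(\beta+1)q$. One should also record that enlarging $p$ leaves intact the moment bounds underlying Theorem~\ref{theo34} (these hold for every admissible $p$), so the resulting $H$, although it depends on $\e$ through $p$, is genuinely finite.
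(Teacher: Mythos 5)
Your proposal is correct and follows essentially the same route as the paper: specialize Theorem~\ref{theo34} with a polynomial choice of $f$, take $h(\D)\sim\D^{-\e}$, and absorb the third term by choosing $p$ sufficiently large, which is exactly what the hypothesis ``Assumption~\ref{A3} for any $p>2(\beta+1)q$'' is there for. The only (immaterial) difference is that the paper takes $f(u)=H_4u^{\beta+2}$ rather than your $f(u)\asymp u^{\beta+1}$, so its decay exponent reads $\e\bigl(p-(\beta+1)\bar q\bigr)/(\beta+2)$ instead of your $\e\bigl(p-(\beta+1)\bar q\bigr)/(\beta+1)$; both are harmless since $p$ is free to be large, and your write-up actually supplies the term-by-term verification that the paper leaves implicit.
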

\begin{proof}
	First we define $f(u)=H_4u^{\be+2}$, $\h u\geq1.$
	It is easy to get 
	\begin{equation*}
	f^{-1}(u)=\left(\frac{u}{H_4}\right)^{\frac{1}{\be+2}}
	\end{equation*}
	Then, let 
	\begin{equation*}
	h(\D)=\D^{-\e}
	\end{equation*}
	for some $\e\in(0,1/4]$ and $\hat{h}>(1\vee \textcolor{blue}{f(1)})$.\\
	Applying Theorem \ref{theo34}, we can see that 
	\begin{equation*}
	\textcolor{blue}{\sup\limits_{t_0\leq t\leq T}\EE|y(t)-X\textcolor{blue}{_\D}(t)|^{\bar{q}}}\leq H\left(\D^{min\left(\frac{\e\left(p-\left(\be+1\right)\bar{q}\right)}{\be+2},\a\bar{q},\left(1-2\e\right)\bar{q}\right)}\right).
	\end{equation*}
	We can get the desired assertions easily by choosing a sufficiently large $p$. \eproof
\end{proof}
To explain the improvement of the main theorem of this paper, we recall \textcolor{blue}{Theorem 3.7} in \cite{GLMY2018} as follows.
\begin{theorem}\label{thG}
	Let Assumptions \ref{A1}, \ref{A2} and \textcolor{blue}{\eqref{DSYJ}} hold. Furthermore, assume that for any given $p\geq 2$, there exists a $q\in (p,\infty)$. In addition, if
	\begin{equation}
	\label{GLMDELTATJ}
	h(\D)\geq\textcolor{blue}{f}\left(\left(\D^{\textcolor{blue}{\frac{p}{2}}}\left(h(\D)\right)^{{\textcolor{blue}{p}}}\right)^{-1/(q-{\textcolor{blue}{\frac{p}{2}}})}\right)
	\end{equation}
	holds for all sufficiently small $\D\in\textcolor{blue}{(0,1]}$, then for any fixed \textcolor{blue}{$T\geq t_0$}
	\begin{equation}
	\label{GLMSL}
	\EE|\textcolor{blue}{y(T)-X_{N+1}}|^{\textcolor{blue}{p}}\leq K\D^{\textcolor{blue}{p}}\left(h(\D)\right)^{\textcolor{blue}{2p}}
	\end{equation}
	holds, where K is a positive constant independent of $\D$.\
\end{theorem}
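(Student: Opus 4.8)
\noindent The plan is to combine the truncation technique with the Milstein-type second-order Taylor expansion recorded in \eqref{Secthreedingyi}--\eqref{secthree3}, localising the true and numerical solutions at the level $R:=f^{-1}(h(\D))$ at which the truncated functions agree with $\m,\s,G^l$. The first step is a $\D$-uniform moment bound $\sup_{0<\D\le 1}\sup_{t_0\le t\le T}\EE|X_\D(t)|^{q}\le C(1+|y_0|^{q})$ for the continuous scheme \eqref{continuousvertion}, together with the same bound for the piecewise-constant companion $\bar X_\D$. This follows from It\^o's formula applied to $|X_\D(t)|^{q}$, using the truncated-coefficient bound \eqref{sigmamuGDelta} and the dissipativity carried by Assumption \ref{A2} to dominate the drift--diffusion combination, and observing that the Milstein increment $L\s_\D(\bar X_\D)\,\D B(s)\,dB(s)$ only contributes a term of size $\D^{1/2}h(\D)^{2}=(\D^{1/4}h(\D))^{2}\le\hat h^{2}$, which stays bounded by \eqref{TJ}; a Gronwall inequality then closes the estimate. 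Likewise, from \eqref{sigmamuGDelta} and the Burkholder--Davis--Gundy inequality one gets the one-step bound $\EE|X_\D(t)-\bar X_\D(t)|^{2p}\le C\D^{p}h(\D)^{2p}$ for $t_i\le t< t_{i+1}$.

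The second step is a sharp estimate of the Milstein remainders. The key observation is that in \eqref{phi} with $\phi=\s_\D$ the first-order term $\phi'(\bar X_\D(t))\int_{t_i}^{t}\s_\D(\bar X_\D(s))\,dB(s)$ equals $L\s_\D(\bar X_\D(t))\,\D B(t)$ exactly, since $\s_\D(\bar X_\D)$ is constant on $[t_i,t]$; hence the remainder $R_{\s_\D}(t,X_\D(t),\bar X_\D(t))$ of \eqref{secthree3} reduces to the genuinely higher-order pieces of \eqref{secthree2} --- a drift integral of size $\D h(\D)$, an iterated stochastic integral of size $\D h(\D)^{2}$, and a second-order Taylor term bounded, via the \emph{uniform} derivative bound \eqref{Daoshu2}, by $|X_\D(t)-\bar X_\D(t)|^{2}$. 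Together with the first step this gives $\EE|R_{\s_\D}(t,X_\D(t),\bar X_\D(t))|^{p}\le C\D^{p}h(\D)^{2p}$, uniformly in $R$, which is exactly why hypothesis \eqref{DSYJ} is imposed. The drift difference $\m(X_\D(t))-\m(\bar X_\D(t))$ is expanded the same way; but its first-order term $\m_\D'(\bar X_\D(t))\int_{t_i}^{t}\s_\D(\bar X_\D)\,dB$ does \emph{not} cancel against anything, so I would carry it along and, when it later enters an expectation, exploit that $\int_{\kappa(s)}^{s}\s_\D(\bar X_\D)\,dB$ has conditional mean zero given $\F_{\kappa(s)}$, together with one further first-order expansion of $|e|^{p-2}e$ about the grid point $\kappa(s)$, to recover the missing half order.

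The third step is the error estimate itself. Put $e(t)=y(t)-X_\D(t)$ and $\tau_\D=\inf\{t\ge t_0:|y(t)|\ge R\}\wedge\inf\{t\ge t_0:|X_\D(t)|\ge R\}$, and apply It\^o's formula to $|e(t\wedge\tau_\D)|^{p}$. On $[t_0,\tau_\D]$ the truncated coefficients coincide with $\m,\s,L\s$, so the drift of $e$ splits as $[\m(y)-\m(X_\D)]+[\m(X_\D)-\m(\bar X_\D)]$ and its effective diffusion as $[\s(y)-\s(X_\D)]+R_{\s_\D}(t,X_\D(t),\bar X_\D(t))$; the leading brackets are absorbed by Assumption \ref{A2}, whose constant $q>p$ is exactly what is needed to dominate the It\^o correction term involving $|e|^{p-2}|\s(y)-\s(X_\D)|^{2}$, Young's inequality removes the remaining $|e|^{p}$ factors, and every remainder term is $O(\D^{p}h(\D)^{2p})$ by the second step. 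Gronwall's inequality then yields $\sup_{t_0\le t\le T}\EE|e(t\wedge\tau_\D)|^{p}\le C\D^{p}h(\D)^{2p}$. To remove the stopping time one writes $\EE|e(T)|^{p}\le\EE|e(T\wedge\tau_\D)|^{p}+\EE\big[|e(T)|^{p}\ind_{\{\tau_\D<T\}}\big]$, bounds $\PP(\tau_\D<T)\le C R^{-q}$ from the $q$-th moments of $y$ (from \eqref{YJ}) and of $X_\D$ (first step), and controls the exceptional term by H\"older's inequality; hypothesis \eqref{GLMDELTATJ}, which after applying the increasing map $f^{-1}$ amounts to a lower bound on $R=f^{-1}(h(\D))$, is exactly what makes this contribution no larger than $\D^{p}h(\D)^{2p}$. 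Combining the two parts gives \eqref{GLMSL}.

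I expect the only genuinely delicate point to be the non-cancelling drift correction in the second step: extracting the missing half order from $\m_\D'(\bar X_\D)\int_{\kappa(s)}^{s}\s_\D(\bar X_\D)\,dB$ requires a careful use of its martingale structure while keeping the localisation consistent, and this has to be done in concert with the choice of truncation level in \eqref{GLMDELTATJ}, so that the Milstein rate $\D^{p}h(\D)^{2p}$ and the truncation tail are brought to the same order. Everything else is the routine machinery of It\^o calculus, the Burkholder--Davis--Gundy inequality, and Gronwall's lemma.
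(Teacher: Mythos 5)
You should first be aware that this paper contains no proof of Theorem~\ref{thG}: it is Theorem~3.7 of \cite{GLMY2018}, quoted verbatim only so that Remark~3.4 can compare the step-size restriction \eqref{GLMDELTATJ} with the weaker requirements of Theorems~\ref{theo34} and~\ref{th35}. So there is no in-paper proof to measure your attempt against; the relevant benchmark is the argument in \cite{GLMY2018} and, by close analogy, this paper's own proof of Theorem~\ref{theo34}.

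Measured against that benchmark, your outline is essentially the standard route and is sound in structure: uniform moment bounds for $X_\D$ and $\bar X_\D$, the one-step estimate $\EE|X_\D(t)-\bar X_\D(t)|^{2p}\le C\D^p h(\D)^{2p}$, the exact cancellation of the first-order Taylor term of $\s_\D$ against $L\s_\D(\bar X_\D)\D B$ (correct, since $\bar X_\D$ is constant on $[t_i,t)$), remainder bounds of order $\D^p h(\D)^{2p}$ via \eqref{DSYJ}, It\^o's formula on $|e|^p$ with Assumption~\ref{A2} (with its constant $q>p$) absorbing the leading drift--diffusion bracket, Gronwall, and finally removal of the localisation by Chebyshev/Young, with \eqref{GLMDELTATJ} calibrated so that the tail term $R^{-(q-p/2)}$ with $R=f^{-1}(h(\D))$ is dominated by $\D^{p/2}h(\D)^{p}$. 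This mirrors how the present paper proves Theorem~\ref{theo34} (there the truncation tail is instead left explicit as $(f^{-1}(h(\D)))^{(\beta+1)\bar q-p}$, which is precisely how the step-size constraint is avoided). Two small corrections: the $\D$-uniform moment bound is obtained in this paper (Lemma~\ref{shuzhijieYJ}) from the Khasminskii-type Assumption~\ref{A3} via Lemma~\ref{lemma1}, not from the one-sided Lipschitz condition directly --- if you want to work only under Assumptions~\ref{A1}--\ref{A2} as the theorem states, you must first derive the Khasminskii condition by setting $y=0$ in Assumption~\ref{A2} and invoking the linear-growth-at-zero bound implicit in \eqref{JDZ}; and the ``missing half order'' from the non-cancelling drift term $\mu'(\bar X_\D)\int_{\kappa(s)}^s\s_\D\,dB$ is exactly the $I_{11}$-type term that both \cite{GLMY2018} and this paper dispatch by the martingale/conditioning argument of (3.35) in \cite{WG2013}, so your plan there is the right one. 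Neither point is a gap in the approach, only in the bookkeeping.
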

\begin{rmk}
Let us demonstrate that compared with the main result in \cite{GLMY2018} our result in this paper releases the constraint on the step-size.
\par
	Consider the scalar SDE
	\begin{equation}
	dy(t)=\left(y(t)-2y^5(t)\right)dt+y^2(t)dB(t),\quad t\geq t_0,
	\end{equation}
	with $t_0=0$  and the  initial value $y(t_0)=1.$\\
Due to the fact that 
\begin{equation*}
\sup\limits_{\vert x\vert\leq u}\left(\vert\mu(x)\vert \vee\vert\sigma(x)\vert\vee\vert L\sigma(x)\vert\right)\leq 3u^5,\quad\forall u\geq1,
\end{equation*}
so we choose $f(u)=3u^5$ and define $h(u)=u^{-\varepsilon}$ for $\varepsilon\in(0,1/4]$.
\par
Choose $\varepsilon=1/4$, then condition \eqref{GLMDELTATJ} is satisfied with $p=1$, $q=12$ and $\D\leq 10^{-21}$. By Theorem \ref{thG} (Theorem 3.7 in \cite{GLMY2018}), we can conclude  that
\begin{equation*}
\EE|y(T)-X_{N+1}|^{2p}\leq K\D^p, 
\end{equation*}
this is to say that the convergence rate is  $1/2$.
\par \noindent
However, take $\varepsilon$ to be $1/4$ and choose $p$ sufficiently large, it can be derived from Theorem \ref{th35} that 
\begin{equation*}
\EE|y(T)-X_\Delta(T)|^{\bar{q}}\leq K\D^{\frac{1}{2}\bar{q}}, 
\end{equation*}
which means that the convergence rate is $1/2$. It should be noted that we do not put the constraint $\D\leq 10^{-21}$ here.
\par
Therefore, compared with the main theorem in \cite{GLMY2018}, the strong requirement on the step-size, $\D\leq 10^{-21}$, is not needed for our main result, which shows that our result releases the  requirement on the step-size.
\end{rmk}
\begin{rmk}\label{mainrmk}
  Theorem \ref{th35} tells us that the order of convergence of the truncated Milstein method is $min(1-2\e,\a)$. If $\alpha$ is close to $1$, the convergence rate  will not very different from that of the traditional truncated Milstein method. Conversely, if $\alpha$ is equal to $1/4$ or more less than $1$, then the order of convergence will worse than the traditional sense. This shows that the H\"older-continuous time variable does affect the order of convergence dramatically.
\end{rmk}
\subsection{Important lemmas}

The next lemma shows that the truncated coefficients inherit the same inequality of the original coefficients.
\begin{lemma}\label{lemma1}
Assume that Assumption \ref{A3}  holds, then for all $\Delta\in(0,1]$, \textcolor{blue}{there exists a positive constant $C_5$ such that}
\begin{equation*}
\langle y,\mu_{\Delta}(t,y)\rangle+(p-1)\vert\s_{\D}(t,y)\vert^2\leq C_5(1+\vert y\vert^2),\quad\forall y\in\mathbb{R}^d\textcolor{blue}{.}
\end{equation*}
\end{lemma}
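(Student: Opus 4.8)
The plan is to exploit the fact that for a fixed $\Delta$ the truncated functions $\mu_\Delta(t,y)$ and $\sigma_\Delta(t,y)$ are obtained by evaluating $\mu$ and $\sigma$ at the projected point $\pi_\Delta(y):=(|y|\wedge f^{-1}(h(\Delta)))\,y/|y|$, which always satisfies $|\pi_\Delta(y)|\le |y|$. Write $z=\pi_\Delta(y)$. Then $\mu_\Delta(t,y)=\mu(t,z)$, $\sigma_\Delta(t,y)=\sigma(t,z)$, and by Assumption \ref{A3} applied at the point $z$ we have $\langle z,\mu(t,z)\rangle+(p-1)|\sigma(t,z)|^2\le C_3(1+|z|^2)\le C_3(1+|y|^2)$. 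So the only real work is to replace the inner product $\langle z,\mu_\Delta(t,y)\rangle$ by $\langle y,\mu_\Delta(t,y)\rangle$, i.e.\ to control the discrepancy $\langle y-z,\mu_\Delta(t,y)\rangle$.

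First I would split into the two cases $|y|\le f^{-1}(h(\Delta))$ and $|y|> f^{-1}(h(\Delta))$. In the first case $z=y$ and the claim is just Assumption \ref{A3} with $C_5=C_3$. In the second case $z=f^{-1}(h(\Delta))\,y/|y|$, so $y-z=(|y|-f^{-1}(h(\Delta)))\,y/|y|$, which is a nonnegative multiple of $z$ (and of $y$); write $y=\lambda z$ with $\lambda=|y|/f^{-1}(h(\Delta))\ge 1$. Then $\langle y-z,\mu_\Delta(t,y)\rangle=(\lambda-1)\langle z,\mu(t,z)\rangle$. Using Assumption \ref{A3} at $z$, $\langle z,\mu(t,z)\rangle\le C_3(1+|z|^2)-(p-1)|\sigma(t,z)|^2\le C_3(1+|z|^2)$, so
\begin{equation*}
\langle y,\mu_\Delta(t,y)\rangle+(p-1)|\sigma_\Delta(t,y)|^2
=\langle z,\mu(t,z)\rangle+(p-1)|\sigma(t,z)|^2+(\lambda-1)\langle z,\mu(t,z)\rangle
\le \lambda\big(C_3(1+|z|^2)\big).
\end{equation*}
Since $\lambda|z|=|y|$ and $|z|=f^{-1}(h(\Delta))\ge f^{-1}(f(1))=1$ (because $h(\Delta)\ge f(1)$), we get $\lambda|z|^2=|y|\,|z|\ge |y|$ and also $\lambda\le \lambda|z|^2=|y|^2/f^{-1}(h(\Delta))^2\le |y|^2$; hence $\lambda(1+|z|^2)\le \lambda+\lambda|z|^2\le |y|^2+|y||z|$. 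It remains to bound $|y||z|\le \tfrac12(|y|^2+|z|^2)\le |y|^2$ (or simply $|y||z|\le |y|^2$ directly since $|z|\le|y|$), giving $\lambda(1+|z|^2)\le 2(1+|y|^2)$, so the inequality holds with $C_5=2C_3$; taking $C_5=2C_3$ covers both cases.

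The main obstacle, such as it is, is purely bookkeeping: making sure the sign of $\langle z,\mu(t,z)\rangle$ is handled correctly when multiplied by $\lambda-1\ge 0$. If $\langle z,\mu(t,z)\rangle$ were negative there would be nothing to prove for that term (it only helps), so the estimate above is the worst case; one should phrase it as $(\lambda-1)\langle z,\mu(t,z)\rangle\le (\lambda-1)\big(C_3(1+|z|^2)\big)^+$ and then absorb. One also must invoke the Remark on one-sided derivatives only implicitly — Assumption \ref{A3} itself is stated for the untruncated $\mu,\sigma$ and is applied at the genuine point $z\in\mathbb{R}^d$, so no differentiability subtlety enters. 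I would close by noting that $C_5$ depends only on $C_3$ (and not on $\Delta$), which is what is needed for the later moment bounds.
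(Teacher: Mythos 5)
Your argument is correct and coincides with the standard proof that the paper invokes by citing Lemma 2.5 of \cite{HLM2018} (the paper itself supplies no details): project radially onto the ball of radius $f^{-1}(h(\Delta))$, write $y=\lambda z$ with $\lambda\ge 1$, and use $(p-1)|\sigma(t,z)|^2\ge 0$ together with $f^{-1}(h(\Delta))\ge f^{-1}(f(1))=1$ to absorb the factor $\lambda$ into $2C_3(1+|y|^2)$. The only blemish is the intermediate identity $\lambda|z|^2=|y|^2/f^{-1}(h(\Delta))^2$, which should read $\lambda|z|^2=|y|\,|z|$; since that is exactly what you use in the following line, the conclusion with $C_5=2C_3$ stands.
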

The proof of this lemma follows the same idea used in \textcolor{blue}{the proof of Lemma 2.5 in} \cite{HLM2018}.
\par
The following lemma shows the difference between the discrete and continuous versions of the truncated Milstein method in the moment sense.
\begin{lemma}\label{Exandexbaryj}
For any $\Delta\in(0,1]$, $t\in[t_0,T]$ and all $p\geq2$\\
\begin{equation*}
\EE\vert X\textcolor{blue}{_\D}(t)-\bar{X}\textcolor{blue}{_\D}(t)\vert^p\leq C\Delta^{\frac {p}{2}}(h(\Delta))^p,
\end{equation*}\\
where $C$ is a constant independent of $\Delta$, consequently,
\begin{equation*}
\lim_{\Delta\to 0}E\vert X\textcolor{blue}{_\D}(t)-\bar{X}\textcolor{blue}{_\D}(t)\vert^p=0,\quad \forall t\in[t_0,T].
\end{equation*}
\end{lemma}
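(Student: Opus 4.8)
The plan is to estimate the difference $X_\D(t) - \bar X_\D(t)$ directly from the continuous-version formula \eqref{continuousvertion}. For $t \in [t_i, t_{i+1})$ we have
$$
X_\D(t) - \bar X_\D(t) = \int_{t_i}^t \mu_\D(\kappa(s), \bar X_\D(s))\,ds + \int_{t_i}^t \sigma_\D(\kappa(s), \bar X_\D(s))\,dB(s) + \int_{t_i}^t L\sigma_\D(\kappa(s), \bar X_\D(s))\,\D B(s)\,dB(s).
$$
First I would raise both sides to the $p$-th power and apply the elementary inequality $|a+b+c|^p \le 3^{p-1}(|a|^p + |b|^p + |c|^p)$ to split into three terms. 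The drift term is handled by H\"older's inequality in time, turning $\left|\int_{t_i}^t \mu_\D\,ds\right|^p$ into a bound of order $\D^{p-1}\int_{t_i}^t |\mu_\D|^p\,ds$; since $|\mu_\D(\kappa(s),\bar X_\D(s))| \le h(\D)$ by \eqref{sigmamuGDelta}, this contributes $\D^p (h(\D))^p$, which is dominated by the claimed bound $\D^{p/2}(h(\D))^p$ since $\D \le 1$.

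For the second term, the stochastic integral against $dB(s)$, I would invoke the Burkholder--Davis--Gundy inequality (or simply the moment estimate for It\^o integrals when $p = 2$), bounding $\EE\left|\int_{t_i}^t \sigma_\D\,dB\right|^p$ by a constant times $\EE\left(\int_{t_i}^t |\sigma_\D|^2\,ds\right)^{p/2}$, which by \eqref{sigmamuGDelta} is at most $C\,\D^{p/2}(h(\D))^p$ --- exactly the target order. The third term requires slightly more care: I would again use BDG to get a bound of the form $C\,\EE\left(\int_{t_i}^t |L\sigma_\D(\kappa(s),\bar X_\D(s))|^2 |\D B(s)|^2\,ds\right)^{p/2}$, then pull out $|L\sigma_\D| \le h(\D)$ and handle $\EE\left(\int_{t_i}^t |\D B(s)|^2\,ds\right)^{p/2}$ via another application of H\"older in $s$ together with the moment bound $\EE|B(s) - B(t_i)|^p \le C_p (s - t_i)^{p/2} \le C_p \D^{p/2}$; this yields a contribution of order $\D^{p/2}\cdot \D^{p/2}(h(\D))^p = \D^p (h(\D))^p$, again absorbed into $\D^{p/2}(h(\D))^p$.

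Collecting the three estimates gives $\EE|X_\D(t) - \bar X_\D(t)|^p \le C\,\D^{p/2}(h(\D))^p$ with $C$ depending only on $p$ and $T$ but not on $\D$, which is the first assertion. The second assertion follows immediately: by \eqref{TJ} we have $\D^{1/4}h(\D) \le \hat h$, so $\D^{p/2}(h(\D))^p = \left(\D^{1/4}h(\D)\right)^p \D^{p/4} \le \hat h^{\,p}\,\D^{p/4} \to 0$ as $\D \to 0$. The main obstacle --- really the only non-routine point --- is the joint moment estimate for the third (Milstein correction) integral, where the integrand itself contains the Brownian increment $\D B(s)$; care is needed to apply BDG and H\"older in the right order so that the extra $\D^{p/2}$ from the increment is genuinely a gain rather than being lost, but since $\D \le 1$ even a crude bound suffices to land inside the stated order.
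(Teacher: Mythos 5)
Your proposal is correct and follows essentially the same route as the paper: split the increment via \eqref{continuousvertion}, bound the drift term by H\"older in time, the two stochastic integrals by the Burkholder--Davis--Gundy inequality, and use \eqref{sigmamuGDelta} together with $\EE|\D B(s)|^p\leq C\D^{p/2}$ and finally \eqref{TJ}. The only small imprecision is the bound $|L\s_\D|\leq h(\D)$: since $L\s_\D=\sum_l\s_\D^l G_\D^l$, \eqref{sigmamuGDelta} actually gives $|L\s_\D|\leq d\,(h(\D))^2$, so the Milstein-correction term contributes $C\D^p(h(\D))^{2p}$ (as in the paper), which is still absorbed into $C\D^{p/2}(h(\D))^p$ by the condition $\D^{1/4}h(\D)\leq\hat h$ from \eqref{TJ}.
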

\begin{proof}
Fix the step size $\Delta\in(0,1]$ arbitrarily, for any $t\geq\textcolor{blue}{t_0}$, there exists a constant  $i\geq 0$ such that $t_i\leq t< t_{i+1}.$ We derive from \eqref{continuousvertion} that
\begin{equation*}
\begin{aligned}
\EE\vert X\textcolor{blue}{_\D}(t)-\bar{X}\textcolor{blue}{_\D}(t)\vert^p&\leq C\EE\bigg(\left|\int_{t_i}^t\mu_{\D}\left(\kappa(s),\bar{X}\textcolor{blue}{_\D}(s)\right)ds\right|^p+\left|\int_{t_i}^t\sigma_{\D}\left(\kappa(s),\bar{X}\textcolor{blue}{_\D}(s)\right)dB(s)\right|^p\\
&\quad+\left|\int_{t_i}^tL\s_{\D}\left(\kappa(s),\bar{X}\textcolor{blue}{_\D}(s)\right)\Delta B(s)dB(s)\right|^p\bigg),
\end{aligned}
\end{equation*}
where the elementary inequality $|\sum_{i=1}^m{a_i}|^p\leq m^{p-1}\sum_{i=1}^m|{a_i}|^p$ has been used, and $C$ is a positive constant independent of $\Delta$ that may change from line to line. We  derive from the elementary inequality, the H\"older inequality and \textcolor{blue}{the Burkholder-Davis-Gundy inequality (Theorem 1.7.1 in \cite{Mao2007})}  that
\begin{equation*}
\begin{aligned}
\EE\vert X\textcolor{blue}{_\D}(t)-\bar{X}\textcolor{blue}{_\D}(t)\vert^p&\leq C\bigg(\Delta^{p-1}\EE\int_{t_i}^t\left|\mu_{\D}\left(\kappa(s),\bar{X}\textcolor{blue}{_\D}(s)\right)\right|^pds+\Delta^{\frac{p-2}{2}}\EE\int_{t_i}^t\left|\s_{\D}\left(\kappa(s),\bar{X}\textcolor{blue}{_\D}(s)\right)\right|^pds\\
&\quad+\Delta^{\frac{p-2}{2}}\EE\int_{t_i}^t\left|L\s_{\D}\left(\kappa(s),\bar{X}\textcolor{blue}{_\D}(s)\right)\Delta B(s)\right|^pds\bigg).
\end{aligned}
\end{equation*} \\
By using \eqref{sigmamuGDelta} and the inequality of $\EE|\Delta B(s)|^p\leq C\Delta^{p/2}$ for $s\in[t_i,t_{i+1})$, we get 
\begin{equation*}
\EE\vert X\textcolor{blue}{_\D}(t)-\bar{X}\textcolor{blue}{_\D}(t)\vert^p\leq C\left(\Delta^ph(\Delta)^p+\Delta^{\frac{p}{2}}h(\Delta)^p+\Delta^ph(\Delta)^{2p}\right).
\end{equation*}
Applying \eqref{TJ}, the desired assertion holds.
\eproof
\end{proof}
The next lemma shows the moment boundedness of the truncated Milstein method.
\begin{lemma}\label{shuzhijieYJ}
Let Assumption \ref{A3} holds. Then, for any $\Delta\in(0,1]$ and any $T\geq t_0$\\
\begin{equation}
\sup\limits_{0<\Delta\leq 1}\sup\limits_{t_0\leq t\leq T}\EE\vert X\textcolor{blue}{_\D}(t)\vert^p\leq 
K(1+\EE\vert \textcolor{blue}{y_0}\vert^p),
\end{equation}\\
where $K$ is a positive constant dependent on $T$ but independent of $\Delta$.
\end{lemma}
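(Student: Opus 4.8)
The plan is to apply It\^o's formula to $V(x)=(1+|x|^2)^{p/2}$ along the continuous version \eqref{continuousvertion} of the scheme and then to close a Gronwall inequality for $\EE V(X_\D(t))$ with constants that do not depend on $\Delta$. Fix $\Delta\in(0,1]$ and view $X_\D$ as the It\^o process with drift $\tilde\mu(t)=\mu_\D(\kappa(t),\bar X_\D(t))$ and diffusion $\tilde\sigma(t)=\s_\D(\kappa(t),\bar X_\D(t))+L\s_\D(\kappa(t),\bar X_\D(t))\D B(t)$. Since the martingale part is only \emph{a priori} a local martingale, I would first work on $[t_0,t\wedge\rho_R]$ with $\rho_R=\inf\{s\ge t_0:|X_\D(s)|\ge R\}$ and pass to the limit $R\to\infty$ by Fatou at the end; that $\EE V(X_\D(s))<\infty$ for each fixed $s$ and $\Delta$ follows because the truncated coefficients are bounded by $h(\Delta)$, so the discrete iterates have finite $p$-th moments, which is then propagated to $X_\D(s)$ through stochastic integrals over bounded intervals.

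It\^o's formula gives that the drift of $V(X_\D(t))$ equals $p(1+|X_\D(t)|^2)^{p/2-1}\big[\langle X_\D(t),\tilde\mu(t)\rangle+\tfrac12|\tilde\sigma(t)|^2+\tfrac{p-2}{2}(1+|X_\D(t)|^2)^{-1}|\langle X_\D(t),\tilde\sigma(t)\rangle|^2\big]$; bounding $|\langle X_\D,\tilde\sigma\rangle|^2\le(1+|X_\D|^2)|\tilde\sigma|^2$ and using $p\ge2$, this is at most $p(1+|X_\D(t)|^2)^{p/2-1}\big[\langle X_\D(t),\tilde\mu(t)\rangle+\tfrac{p-1}{2}|\tilde\sigma(t)|^2\big]$. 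The key structural point is then to transfer the coefficient arguments from $X_\D(t)$ back to $\bar X_\D(t)$ so that Lemma \ref{lemma1} applies: write $\langle X_\D(t),\tilde\mu(t)\rangle=\langle\bar X_\D(t),\mu_\D(\kappa(t),\bar X_\D(t))\rangle+\langle X_\D(t)-\bar X_\D(t),\mu_\D(\kappa(t),\bar X_\D(t))\rangle$ and $|\tilde\sigma(t)|^2\le 2|\s_\D(\kappa(t),\bar X_\D(t))|^2+2h(\Delta)^2|\D B(t)|^2$. Since $\tfrac{p-1}{2}\cdot2=p-1$, the "matched'' contribution $\langle\bar X_\D,\mu_\D(\kappa,\bar X_\D)\rangle+(p-1)|\s_\D(\kappa,\bar X_\D)|^2$ is $\le C_5(1+|\bar X_\D(t)|^2)$ by Lemma \ref{lemma1}, while the leftover terms are controlled by $|X_\D(t)-\bar X_\D(t)|\,h(\Delta)+(p-1)h(\Delta)^2|\D B(t)|^2$ using the uniform bound $|\mu_\D|\le h(\Delta)$ in \eqref{sigmamuGDelta}.

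Multiplying by $p(1+|X_\D(t)|^2)^{p/2-1}$, taking expectations and using $(1+|\bar X_\D|^2)\le2(1+|X_\D|^2)+2|X_\D-\bar X_\D|^2$, Young's inequality with conjugate exponents $\tfrac{p}{p-2}$ and $\tfrac p2$ reduces everything to $\EE V(X_\D(s))$ plus error terms of the shape $\EE|X_\D(s)-\bar X_\D(s)|^{p}$, $h(\Delta)^{p/2}\EE|X_\D(s)-\bar X_\D(s)|^{p/2}$ and $h(\Delta)^{p}\EE|\D B(s)|^{p}$. By Lemma \ref{Exandexbaryj} (and Jensen's inequality when $p<4$) together with $\EE|\D B(s)|^p\le C\Delta^{p/2}$, each of these is bounded by a constant multiple of $\Delta^{p/2}h(\Delta)^{p}$ or $\Delta^{p/4}h(\Delta)^{p}$, hence by a power of $\hat h$ on account of $\Delta^{1/4}h(\Delta)\le\hat h$ in \eqref{TJ}; all constants involved are independent of $\Delta$. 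One is then left with $\EE V(X_\D(t\wedge\rho_R))\le V(y_0)+C(1+T-t_0)+C\int_{t_0}^t\EE V(X_\D(s\wedge\rho_R))\,ds$, and Gronwall's inequality followed by $R\to\infty$ (Fatou) and $2^{-p/2}|x|^p\le V(x)$ finishes the proof, with $K$ depending on $T,p,d,C_5,\hat h$ but not on $\Delta$.

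The main obstacle is precisely this argument mismatch together with the extra Milstein term $L\s_\D(\kappa(\cdot),\bar X_\D(\cdot))\D B(\cdot)$ inside $\tilde\sigma$: handled carelessly these generate factors $h(\Delta)^p$ that blow up as $\Delta\to0$, and the whole point is that they always come multiplied by $\Delta^{p/2}$ or by a moment of $X_\D-\bar X_\D$ (itself of order $\Delta^{p/2}h(\Delta)^p$ by Lemma \ref{Exandexbaryj}), so the scaling restriction $\Delta^{1/4}h(\Delta)\le\hat h$ keeps every error term bounded uniformly in $\Delta$. Getting the bookkeeping of these exponents right, rather than any deep estimate, is where the care is needed.
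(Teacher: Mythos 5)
Your proposal is correct and follows essentially the same route as the paper: localize with a stopping time, apply It\^o's formula to a $p$-th power Lyapunov function, shift the coefficient arguments from $X_\Delta$ to $\bar X_\Delta$ so that Lemma \ref{lemma1} controls the matched drift--diffusion combination, absorb the leftover terms (including the Milstein correction $L\s_\D\,\D B$) via \eqref{sigmamuGDelta}, Lemma \ref{Exandexbaryj} and the scaling $\Delta^{1/4}h(\Delta)\le\hat h$ from \eqref{TJ}, then conclude by Gronwall and $R\to\infty$. The only cosmetic differences are your choice of $(1+|x|^2)^{p/2}$ in place of $|x|^p$ and your more explicit justification that the stochastic integral has zero expectation.
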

\begin{proof}
	For any real number $R>X_0$, we define the stopping time 
	\begin{equation*}
	\r_R:=\inf\{t\geq t_0:|X_{\D}(t)|\geq R\}.
	\end{equation*} 
Applying the It\^o formula, we derive from \eqref{continuousvertion}, for any $t\in[t_0,\r_R\land T]$ 
\begin{equation*}
\begin{aligned}
\EE|X\textcolor{blue}{_\D}(t)|^{p}\textcolor{blue}{=}& \EE|X\textcolor{blue}{_\D}(t_0)|^{p}+p\EE\int_{t_0}^t|X\textcolor{blue}{_\D}(s)|^{p-2}\langle X\textcolor{blue}{_\D}(s),\mu_{\D}(\k(s),\bar{X}\textcolor{blue}{_\D}(s))\rangle ds\\
&+\frac{p(p-1)}{2}\EE\int_{t_0}^t|X\textcolor{blue}{_\D}(s)|^{p-2}\Big|\s_{\D}\left(\k(s),\bar{X}\textcolor{blue}{_\D}(s)\right)+L\s_{\D}\left(\k(s),\bar{X}\textcolor{blue}{_\D}(s)\right)\Delta B(s)\Big|^{2}ds,
\end{aligned}
\end{equation*}
where the fact that 
\begin{equation*}
\EE\left(\int_{t_0}^tp|X\textcolor{blue}{_\D}(s)|^{p-2}\left\langle X\textcolor{blue}{_\D}(s),\s_{\D}(\k(s),\bar{X}\textcolor{blue}{_\D}(s))+L\s_{\D}(\k(s),\bar X\textcolor{blue}{_\D}(s))\Delta B(s)\right\rangle dB(s)\right)=0
\end{equation*}
is used.  
\textcolor{blue}{Since $p|X{_\D}(s)|^{p-2}\big\langle X\textcolor{blue}{_\D}(s),\s_{\D}\left(\k(s),\bar{X}\textcolor{blue}{_\D}(s)\right)+L\s_{\D}\left(\k(s),\bar{X}\textcolor{blue}{_\D}(s)\right)\Delta B(s)\big\rangle$ is $\mathcal{F}_s$-measurable, by Theorem 3.2.1 in \cite{Oksendal2003}} we see the fact above is true.
 \\
 We rewrite the inequality as
\begin{equation*}
\begin{aligned}
\EE|X\textcolor{blue}{_\D}(t)|^{p}\leq&
\EE|X\textcolor{blue}{_\D}(t_0)|^{p}+p\EE\int_{t_0}^t|X\textcolor{blue}{_\D}(s)|^{p-2}\left(\left\langle \bar{X}\textcolor{blue}{_\D}(s),\mu_{\D}(\k(s),\bar{X}\textcolor{blue}{_\D}(s))\right\rangle+(p-1)\Big|\s_{\D}\left(\k(s),\bar{X}\textcolor{blue}{_\D}(s)\right)\Big|^{2}\right)\\
&+p(p-1)\EE\int_{t_0}^t|X\textcolor{blue}{_\D}(s)|^{p-2}\Big|L\s_{\D}\left(\k(s),\bar{X}\textcolor{blue}{_\D}(s)\right)\Delta B(s)\Big|^{2}ds\\
&+p\EE\int_{t_0}^t|X\textcolor{blue}{_\D}(s)|^{p-2}\left\langle X\textcolor{blue}{_\D}(s)-\bar{X}\textcolor{blue}{_\D}(s),\mu_{\D}(\k(s),\bar{X}\textcolor{blue}{_\D}(s))\right\rangle ds.
\end{aligned}
\end{equation*}
By Lemma \ref{lemma1}\textcolor{blue}{,} \eqref{sigmamuGDelta} \textcolor{blue}{and Assumption \ref{A3}}, we get
\begin{equation*}
\begin{aligned}
\EE|X\textcolor{blue}{_\D}(t)|^{p}\leq&
\EE|\textcolor{blue}{y_0}|^{p}+K\EE\int_{t_0}^t|X\textcolor{blue}{_\D}(s)|^{p-2}\left(1+|\bar{X}\textcolor{blue}{_\D}(s)|^{2}\right)ds+K\EE\int_{t_0}^t|X\textcolor{blue}{_\D}(s)|^{p-2}|h(\Delta)|^{4}\Delta ds\\
&+p\EE\int_{t_0}^t|X\textcolor{blue}{_\D}(s)|^{p-2}\left\langle X\textcolor{blue}{_\D}(s)-\bar{X}\textcolor{blue}{_\D}(s),\mu_{\D}(\k(s),\bar{X}\textcolor{blue}{_\D}(s))\right\rangle ds, 
\end{aligned}
\end{equation*}
where K is a constant independent of $\Delta$ that may change from line to line.
By using the Young inequality 

\begin{equation*}
a^{p-2}b\leq \frac{p-2}{p}a^{p}+\frac{2}{p}b^{\frac{p}{2}},
\end{equation*}
we have 
\begin{equation}
\begin{aligned}
\EE|X\textcolor{blue}{_\D}(t)|^{p}\leq&
\EE|\textcolor{blue}{y_0}|^{p}+K\EE\int_{t_0}^t|X\textcolor{blue}{_\D}(s)|^{p}ds+K\EE\int_{t_0}^t|\bar{X}\textcolor{blue}{_\D}(s)|^{p}ds\\
&+K\EE\int_{t_0}^t|h(\Delta)|^{2p}\Delta^{\frac{p}{2}}ds+K\EE\int_{t_0}^t |X\textcolor{blue}{_\D}(s)-\bar{X}\textcolor{blue}{_\D}(s)|^{\frac{p}{2}}|\mu_{\D}(\k(s),\bar{X}\textcolor{blue}{_\D}(s))|^{\frac{p}{2}} ds.\label{lemma1zheng1}
\end{aligned}
\end{equation}
By \eqref{TJ}, \eqref{sigmamuGDelta} and Lemma \ref{Exandexbaryj}, we obtain
\begin{equation}\label{lemmazheng2}
\EE\int_{t_0}^t| X\textcolor{blue}{_\D}(s)-\bar{X}\textcolor{blue}{_\D}(s)|^{\frac{p}{2}}|\mu_{\D}(\kappa(s),\bar{X}\textcolor{blue}{_\D}(s))|^{\frac{p}{2}} ds\leq C\int_{t_0}^t(h(\Delta))^{p}\Delta^{\frac{p}{4}}ds\leq C(t-t_{0}).
\end{equation}
Substituting \eqref{lemmazheng2} into \eqref{lemma1zheng1}, by using \eqref{TJ} we see that
\begin{equation*}
\EE|X\textcolor{blue}{_\D}(t)|^{p}\leq
\EE|\textcolor{blue}{y_0}|^{p}+K(t-t_0)+KC(t-t_0)+\textcolor{blue}{K\int_{t_0}^t\left(\sup\limits_{t_0\leq u\leq s}\EE|X_{\D}(u\land\r_R)|^{p}\right)ds}.
\end{equation*}
Under the fact that the sum of the right-hand-side in the above inequality is a increasing function of $t$, we obtain
\begin{equation*}
\sup\limits_{t_0\leq r\leq t}\EE|X\textcolor{blue}{_\D}(r\land \r_R)|^{p}\leq
\EE|\textcolor{blue}{y_0}|^{p}+K(t-t_0)+KC(t-t_0)+\textcolor{blue}{K\int_{t_0}^t\left(\sup\limits_{t_0\leq u\leq s}\EE|X_{\D}(u\land\r_R)|^{p}\right)ds}.
\end{equation*}
\textcolor{blue}{Now the Gronwall inequality yields that
	\begin{equation*}
	\sup\limits_{t_0\leq r\leq T}\EE|X_{\D}(r\land \r_R)|^{p}\leq
	K(1+\EE|y_0|^p).
	\end{equation*}}
\textcolor{blue}{Finally, the desired assertion follows by letting $R\to\infty$.}
\eproof
\end{proof}

\begin{lemma}\label{lemma31}
If Assumptions \ref{A1}, \ref{A3} and \eqref{DSYJ} hold, \textcolor{blue}{and assume that $p\geq2(1+\be)q$, then for any $\bar{q}\in[2,q)$}
\begin{equation*}
\sup\limits_{0<\Delta\leq 1}\sup\limits_{t_0\leq t\leq T}\left[\EE|\mu(t,X\textcolor{blue}{_\D}(t))|^{\textcolor{blue}{2\bar{q}}}\vee \EE|\sigma(t,X\textcolor{blue}{_\D}(t))|^{\textcolor{blue}{2\bar{q}}}\vee \EE\textcolor{blue}{\big|\mu'(t,x)|_{x=X_{\D}(t)}\big|^{2\bar{q}}}\vee \EE\textcolor{blue}{\big|\s'(t,x)|_{x=X_{\D}(t)}\big|^{2\bar{q}}}\right]<\infty.
\end{equation*}
\end{lemma}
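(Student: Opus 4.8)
The plan is to collapse all four quantities to a single power moment of $X_{\D}(t)$ and then quote Lemma \ref{shuzhijieYJ}. First, by the polynomial growth bound \eqref{JDZ} we have
\[
\vert\mu(t,X_{\D}(t))\vert\vee\vert\sigma(t,X_{\D}(t))\vert\leq M_2\bigl(1+\vert X_{\D}(t)\vert^{\be+1}\bigr),
\]
and by \eqref{DSYJ} the (trace) norms of the Jacobians obey the same type of estimate,
\[
\big\vert\mu'(t,x)\big|_{x=X_{\D}(t)}\big\vert\vee\big\vert\sigma'(t,x)\big|_{x=X_{\D}(t)}\big\vert\leq M_3\bigl(1+\vert X_{\D}(t)\vert^{\be+1}\bigr).
\]
Raising each inequality to the power $2\bar q$ and using $(a+b)^r\leq 2^{r-1}(a^r+b^r)$, every one of the four expectations in the statement is bounded by a constant (depending only on $M_2$, $M_3$, $\be$, $\bar q$) times $1+\EE\vert X_{\D}(t)\vert^{2(\be+1)\bar q}$. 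So it suffices to bound $\EE\vert X_{\D}(t)\vert^{2(\be+1)\bar q}$ uniformly in $t\in[t_0,T]$ and $\D\in(0,1]$.

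Next I would check the exponent bookkeeping: since $\bar q<q$ we have $2(\be+1)\bar q<2(\be+1)q\leq p$, so $2(\be+1)\bar q\leq p$. The hypothesis of this lemma includes that Assumption \ref{A3} holds with this $p$, hence Lemma \ref{shuzhijieYJ} applies and gives
\[
\sup_{0<\D\leq1}\ \sup_{t_0\leq t\leq T}\EE\vert X_{\D}(t)\vert^{p}\leq K\bigl(1+\EE\vert y_0\vert^{p}\bigr)<\infty.
\]
By Jensen's (Lyapunov) inequality for moments, $\EE\vert X_{\D}(t)\vert^{2(\be+1)\bar q}\leq\bigl(\EE\vert X_{\D}(t)\vert^{p}\bigr)^{2(\be+1)\bar q/p}$, which is therefore bounded uniformly in $t$ and $\D$. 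Substituting this into the estimates of the previous paragraph yields the claimed finiteness.

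The argument is essentially a direct combination of the growth bounds \eqref{JDZ}, \eqref{DSYJ} with the moment bound of Lemma \ref{shuzhijieYJ}, so there is no serious analytic obstacle; the only point that must be handled carefully is the exponent arithmetic — one needs $2(\be+1)\bar q\leq p$ so that the higher moment supplied by Lemma \ref{shuzhijieYJ} dominates the moment actually needed, and this is precisely where the assumption $p\geq 2(1+\be)q$ together with $\bar q\in[2,q)$ is used.
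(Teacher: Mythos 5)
Your proposal is correct and follows exactly the route the paper intends: the paper's own ``proof'' is a one-line remark that the lemma follows from \eqref{DSYJ} and Lemma \ref{shuzhijieYJ}, and your argument simply fills in the details (growth bounds \eqref{JDZ} and \eqref{DSYJ}, the exponent check $2(\be+1)\bar q< 2(\be+1)q\leq p$, and the Lyapunov inequality). No discrepancy to report.
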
  
we can derive it from \eqref{DSYJ} \textcolor{blue}{and Lemma \ref{shuzhijieYJ}}.

\begin{lemma}\label{lemma33}
If Assumptions \ref{A1}, \ref{A2}, \ref{A3} and \eqref{DSYJ} hold and assume that $p\geq2(1+\be)q$, then for any $\bar{q}\in(2,q)$ and $\D \in(0,1]$,
\begin{equation*}
\EE\big|\textcolor{blue}{\tilde{R}_{\mu}(t,X_{\D}(t),\bar{X}_{\D}(t))}\big|^{\bar{q}}\vee \EE\big|\textcolor{blue}{\tilde{R}_{\s}(t,X_{\D}(t),\bar{X}_{\D}(t))}\big|^{\bar{q}} \vee \EE\big|\textcolor{blue}{\tilde{R}_{\s_{\D}}(t,X_{\D}(t),\bar{X}_{\D}(t))}\big|^{\bar{q}}\leq C\Delta^{\bar{q}} \big(h(\Delta)\big)^{2\bar{q}},
\end{equation*}
where C is a positive constant independent of $\Delta$.
\end{lemma}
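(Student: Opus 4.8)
The plan is to use the Taylor decomposition \eqref{secthree2}, which writes each of the three remainders (denote $\tilde R_\phi$ for $\phi\in\{\mu,\sigma,\sigma_\Delta\}$) as a sum of two pieces: the first--order/Milstein piece
\[
I_\phi:=\phi'(\kappa(t),x)\big|_{x=\bar{X}_\D(t)}\Big(\int_{t_i}^t\mu_\D(\kappa(s),\bar{X}_\D(s))\,ds+\int_{t_i}^t L\sigma_\D(\kappa(s),\bar{X}_\D(s))\,\Delta B(s)\,dB(s)\Big),
\]
and the quadratic Taylor remainder $R_\phi(t,X_\D(t),\bar{X}_\D(t))=\int_0^1(1-\tau)\,\phi''(\kappa(t),x)\big|_{x=\bar{X}_\D(t)+\tau(X_\D(t)-\bar{X}_\D(t))}\big(X_\D(t)-\bar{X}_\D(t),\,X_\D(t)-\bar{X}_\D(t)\big)\,d\tau$. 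It then suffices to bound $\EE|I_\phi|^{\bar q}$ and $\EE|R_\phi|^{\bar q}$ separately by $C\Delta^{\bar q}(h(\Delta))^{2\bar q}$, uniformly in $t\in[t_0,T]$ and $\Delta\in(0,1]$.

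First I would estimate the bracketed process $J:=\int_{t_i}^t\mu_\D\,ds+\int_{t_i}^t L\sigma_\D\,\Delta B(s)\,dB(s)$ in $L^{2\bar q}$. Using $|\mu_\D|\le h(\Delta)$ from \eqref{sigmamuGDelta} and $t-t_i\le\Delta$, the drift part is $\le\Delta\,h(\Delta)$ pathwise, hence contributes $\Delta^{2\bar q}(h(\Delta))^{2\bar q}$; for the correction part I would apply the Burkholder--Davis--Gundy inequality, then $|L\sigma_\D|\le C(h(\Delta))^2$ (again \eqref{sigmamuGDelta}), H\"older in $ds$, and $\EE|\Delta B(s)|^{2\bar q}\le C\Delta^{\bar q}$, to get a bound $C\Delta^{2\bar q}(h(\Delta))^{4\bar q}$. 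Thus $(\EE|J|^{2\bar q})^{1/2}\le C\Delta^{\bar q}(h(\Delta))^{2\bar q}$. For the prefactor $\phi'(\kappa(t),\bar{X}_\D(t))$, in the cases $\phi=\mu,\sigma$ I would combine the polynomial bound \eqref{DSYJ} with the moment bound of Lemma~\ref{shuzhijieYJ} (valid since $2\bar q(\beta+1)<2q(\beta+1)\le p$) to obtain $\EE|\phi'(\kappa(t),\bar{X}_\D(t))|^{2\bar q}\le C$; in the case $\phi=\sigma_\Delta$ this prefactor is $\le M$ deterministically by \eqref{Daoshu2}. A Cauchy--Schwarz split then yields $\EE|I_\phi|^{\bar q}\le(\EE|\phi'|^{2\bar q})^{1/2}(\EE|J|^{2\bar q})^{1/2}\le C\Delta^{\bar q}(h(\Delta))^{2\bar q}$.

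Next I would bound the quadratic remainder via $|R_\phi|\le\tfrac12\sup_{\tau\in[0,1]}\big|\phi''(\kappa(t),\bar{X}_\D(t)+\tau(X_\D(t)-\bar{X}_\D(t)))\big|\,|X_\D(t)-\bar{X}_\D(t)|^2$. For $\phi=\sigma_\Delta$, \eqref{Daoshu2} bounds the second derivative by $M$, so Lemma~\ref{Exandexbaryj} (exponent $2\bar q$) gives $\EE|R_{\sigma_\Delta}|^{\bar q}\le C\,\EE|X_\D(t)-\bar{X}_\D(t)|^{2\bar q}\le C\Delta^{\bar q}(h(\Delta))^{2\bar q}$ immediately. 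For $\phi=\mu,\sigma$, using \eqref{DSYJ} together with $|\bar{X}_\D(t)+\tau(X_\D(t)-\bar{X}_\D(t))|\le|X_\D(t)|+|\bar{X}_\D(t)|$ gives $|R_\phi|\le C(1+|X_\D(t)|^{\beta+1}+|\bar{X}_\D(t)|^{\beta+1})|X_\D(t)-\bar{X}_\D(t)|^2$; I would then apply H\"older with conjugate exponents $(2,2)$, bound the first factor by a constant via Lemma~\ref{shuzhijieYJ} (again because $2\bar q(\beta+1)\le p$), and bound $(\EE|X_\D(t)-\bar{X}_\D(t)|^{4\bar q})^{1/2}$ by $C\Delta^{\bar q}(h(\Delta))^{2\bar q}$ via Lemma~\ref{Exandexbaryj}. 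Summing the bounds for $I_\phi$ and $R_\phi$ completes the proof. The routine parts are the elementary, H\"older and Burkholder--Davis--Gundy estimates together with the truncation bound \eqref{sigmamuGDelta}; the one genuinely delicate point is the quadratic remainder for the untruncated $\mu,\sigma$, where one must balance the $p$-th moment boundedness of $X_\D(t),\bar{X}_\D(t)$ against the $\Delta^{\bar q/2}(h(\Delta))$-smallness of $X_\D(t)-\bar{X}_\D(t)$, and it is precisely the admissible H\"older exponents there that force the standing hypotheses $p\ge2(1+\beta)q$ and $\bar q<q$.
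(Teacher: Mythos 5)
Your proposal is correct and follows essentially the same route as the paper: decompose $\tilde R_\phi$ via \eqref{secthree2} into the first-order piece and the quadratic Taylor remainder, bound the former by Cauchy--Schwarz against the truncation bound \eqref{sigmamuGDelta} and the moment bounds of Lemmas \ref{lemma31} and \ref{shuzhijieYJ}, and the latter by H\"older together with \eqref{DSYJ}, Lemma \ref{shuzhijieYJ} and Lemma \ref{Exandexbaryj}. The only cosmetic difference is that you estimate the iterated stochastic integral by the Burkholder--Davis--Gundy inequality, whereas the paper first evaluates it explicitly as $\tfrac12 L\sigma_\Delta(\Delta B(t)^2-\Delta)$ and uses $\EE|\Delta B(t)^2-\Delta|^{2\bar q}\le C\Delta^{2\bar q}$; both give the same bound.
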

\begin{proof}
Firstly, we give an estimate on $\big|\textcolor{blue}{R_{\mu}(t,X_{\D}(t),\bar{X}_{\D}(t))}\big|^{\bar{q}}$, by Lemmas \ref{Exandexbaryj}\textcolor{blue}{,} \ref{shuzhijieYJ} and \textcolor{blue}{\eqref{DSYJ}}, we obtain a constant C such that
\begin{equation}\label{lemma3zhengming'}
\begin{aligned}
&\EE\big|\textcolor{blue}{R_{\mu}(t,X_{\D}(t),\bar{X}_{\D}(t))}\big|^{\bar{q}}\\
\leq& \int_0^1(1-\tau)^{\bar{q}}\EE\left|\textcolor{blue}{\mu''\left(\kappa(t),x\right)|_{x=\bar{X}_{\D}(t)+\tau\left(X_{\D}(t)-\bar{X}_{\D}(t)\right)}}\left(X\textcolor{blue}{_\D}(t)-\bar{X}\textcolor{blue}{_\D}(t),X\textcolor{blue}{_\D}(t)-\bar{X}\textcolor{blue}{_\D}(t)\right)\right|^{\bar{q}}d\tau\\
\leq&\int_0^1\left[\EE\left|\textcolor{blue}{\mu''\left(\kappa(t),x\right)|_{x=\bar{X}_{\D}(t)+\tau\left(X_{\D}(t)-\bar{X}_{\D}(t)\right)}}\right|^{2\bar{q}}\EE\left|X\textcolor{blue}{_\D}(t)-\bar{X}\textcolor{blue}{_\D}(t)\right|^{4\bar{q}}\right]^{\frac{1}{2}}d\tau\\
\leq&C\Big(1+\EE\big|X\textcolor{blue}{_\D}(t)\big|^{2(1+\be)\bar{q}}+\EE\big|\bar{X}\textcolor{blue}{_\D}(t)\big|^{2(1+\be)\bar{q}}\Big)^{\frac{1}{2}}\left(\EE\left|X\textcolor{blue}{_\D}(t)-\bar{X}\textcolor{blue}{_\D}(t)\right|^{4\bar{q}}\right)^{\frac{1}{2}}\\
\leq& C\Delta^{\bar{q}}h(\Delta)^{2\bar{q}}
\end{aligned}
\end{equation}
where the H\"older inequality  and  the Jensen's inequality are used.\\
Then we can observe from \eqref{secthree2} \textcolor{blue}{and the H\"older inequality} that
\begin{equation}\label{lemma33zheng1'}
\begin{aligned}
\EE\big|\textcolor{blue}{\tilde{R}_{\mu}(t,X_{\D}(t),\bar{X}_{\D}(t))}\big|^{\bar{q}}\leq&C\bigg[\Delta^{\bar{q}}\EE\left|\textcolor{blue}{\mu'\left(\kappa(t),x\right)\big|_{x=\bar{X}_{\D}(t)}}\mu_{\Delta}\left(\kappa(t),\bar{X}\textcolor{blue}{_\D}(t)\right)\right|^{\bar{q}}\\
&+\frac{1}{2}\EE\left|\textcolor{blue}{\mu'\left(\kappa(t),x\right)\big|_{x=\bar{X}_{\D}(t)}}L\sigma_{\Delta}\left(\kappa(t),\bar{X}\textcolor{blue}{_\D}(t)\right)\left(\Delta B(t)^2-\Delta\right)\right|^{\bar{q}}\\
&+\EE\big|\textcolor{blue}{R_{\mu}(t,X_{\D}(t),\bar{X}_{\D}(t))}\big|^{\bar{q}}\bigg]\\
\textcolor{blue}{\leq}&\textcolor{blue}{C\bigg[\Delta^{\bar{q}}\EE\left|\mu'\left(\kappa(t),x\right)\big|_{x=\bar{X}_{\D}(t)}\mu_{\Delta}\left(\kappa(t),\bar{X}_\D(t)\right)\right|^{\bar{q}}}\\
&\textcolor{blue}{+\frac{1}{2}\left(\EE\left|\mu'\left(\kappa(t),x\right)\big|_{x=\bar{X}_{\D}(t)}L\sigma_{\Delta}\left(\kappa(t),\bar{X}_\D(t)\right)\right|^{2\bar{q}}\EE\left|\Delta B(t)^2-\Delta\right|^{2\bar{q}}\right)^{\frac{1}{2}}}\\
&\textcolor{blue}{+\EE\big|\textcolor{blue}{R_{\mu}(t,X_{\D}(t),\bar{X}_{\D}(t))}\big|^{\bar{q}}\bigg]}
\end{aligned}
\end{equation}
for $t_i\leq t< t_{i+1}.$\\
We can derive from the H\"older inequality that
\begin{equation}\label{lemma33zheng2'}
\EE\left|\Delta B(t)^2-\Delta\right|^{\textcolor{blue}{2\bar{q}}}\leq 2^{\textcolor{blue}{2\bar{q}}-1}\left(\EE\big|\Delta B(t)\big|^{\textcolor{blue}{4\bar{q}}}+\Delta^{\textcolor{blue}{2\bar{q}}}\right)\leq 2^{\textcolor{blue}{2\bar{q}}-1}\big(\Delta^{\textcolor{blue}{2\bar{q}}}+\Delta^{\textcolor{blue}{2\bar{q}}}\big)\leq 2^{\textcolor{blue}{2\bar{q}}}\Delta^{\textcolor{blue}{2\bar{q}}}.
\end{equation}
By using Lemma \ref{lemma31} \textcolor{blue}{and} \eqref{sigmamuGDelta}, we can see that for $t_0\leq t\leq T$,
\begin{equation}\label{lemma33zheng31}
\begin{aligned}
\EE\left|\textcolor{blue}{\mu'\left(\kappa(t),x\right)\big|_{x=\bar{X}_{\D}(t)}}\mu_{\Delta}\left(\kappa(t),\bar{X}\textcolor{blue}{_\D}(t)\right)\right|^{\bar{q}}&\leq\textcolor{blue}{\left(h(\D)\right)^{\bar{q}}\EE\left|\textcolor{blue}{\mu'\left(\kappa(t),x\right)\big|_{x=\bar{X}_{\D}(t)}}\right|^{\bar{q}}}\\
&\leq C\big(h(\Delta)\big)^{\bar{q}},
\end{aligned}
\end{equation}
\begin{equation}\label{lemma33zheng32}
\begin{aligned}
\EE\left|\textcolor{blue}{\mu'\left(\kappa(t),x\right)\big|_{x=\bar{X}_{\D}(t)}}L\s_{\Delta}\left(\kappa(t),\bar{X}\textcolor{blue}{_\D}(t)\right)\right|^{\textcolor{blue}{2\bar{q}}}&\leq\textcolor{blue}{\left(h(\D)\right)^{4\bar{q}}\EE\left|\textcolor{blue}{\mu'\left(\kappa(t),x\right)\big|_{x=\bar{X}_{\D}(t)}}\right|^{2\bar{q}}}\\
&\leq C \big(h(\Delta)\big)^{\textcolor{blue}{4\bar{q}}}.
\end{aligned}
\end{equation}
Substituting 
\eqref{lemma3zhengming'}, \eqref{lemma33zheng2'}, \eqref{lemma33zheng31} and \eqref{lemma33zheng32} into \eqref{lemma33zheng1'} and using the independence between $\bar{X}(t)$ and $\Delta B(t)$, we have 
\begin{equation*}
\EE\big|\textcolor{blue}{\tilde{R}_{\mu}(t,X_{\D}(t),\bar{X}_{\D}(t))}\big|^{\bar{q}}\leq C\Delta^{\bar{q}} \big(h(\Delta)\big)^{2\bar{q}}.
\end{equation*}
We obtain the desired result.\\
Similarly, we can show 
\begin{equation*}
 \EE\big|\textcolor{blue}{\tilde{R}_{\s}(t,X_{\D}(t),\bar{X}_{\D}(t))}\big|^{\bar{q}} \vee \EE\big|\textcolor{blue}{\tilde{R}_{\s_{\D}}(t,X_{\D}(t),\bar{X}_{\D}(t))}\big|^{\bar{q}}\leq C\Delta^{\bar{q}} \big(h(\Delta)\big)^{2\bar{q}}.
\end{equation*}
The proof is complete.\eproof
\end{proof}
\subsection{Proof of Theorem 3.1}
\begin{proof}
 Fix $\bar{q}\in[2,q)$ and $\Delta\in(0,1]$ arbitrarily, let $e(t)=y(t)-X(t)$ for $t>t_0$, we define the stopping time for each integer $n>|X_0|$
\begin{equation*}
\theta_n=\inf\Big\{t\geq t_0: \big|X(t)\big|\vee\big|y(t)\big|\geq n\Big\}.
\end{equation*}
We can derive from \textcolor{blue}{the} It\^o formula that for any $t_0\leq t\leq T$,
\begin{equation}\label{th3zheng1}
\begin{aligned}
\EE|e(t\land \theta_n)|^{\bar{q}}\textcolor{blue}{=}& \bar{q}\EE\int_{t_0}^{t\land\theta_n}\big|e(s)\big|^{\bar{q}-2}\Big< y(s)-X\textcolor{blue}{_\D}(s),\mu\big(s,y(s)\big)-\mu_{\Delta}\big(\k(s),\bar{X}\textcolor{blue}{_\D}(s)\big)\Big> ds\\
&+\bar{q}\EE\int_{t_0}^{t\land\theta_n}\frac{\bar{q}-1}{2}\big|e(s)\big|^{\bar{q}-2}\bigg|\sigma\big(s,y(s)\big)-\sigma_{\Delta}\left(\kappa(s),\bar{X}\textcolor{blue}{_\D}(s)\right)\\
&-L\sigma_{\Delta}\left(\kappa(s),\bar{X}\textcolor{blue}{_\D}(s)\right)\Delta B(s)\bigg|^{2}ds.
\end{aligned}
\end{equation}
Substituting \eqref{secthree3} into \eqref{th3zheng1}, we have
\begin{equation*}
\begin{aligned}
\EE|e(t\land \theta_n)|^{\bar{q}}\leq& \bar{q}\EE\int_{t_0}^{t\land\theta_n}\big|e(s)\big|^{\bar{q}-2}\Big< y(s)-X\textcolor{blue}{_\D}(s),\mu\big(s,y(s)\big)-\mu_{\Delta}\big(\k(s),\bar{X}\textcolor{blue}{_\D}(s)\big)\Big> ds\\
&+\bar{q}\EE\int_{t_0}^{t\land\theta_n}\frac{\bar{q}-1}{2}\big|e(s)\big|^{\bar{q}-2}\bigg|\sigma\big(s,y(s)\big)-\sigma_{\Delta}\big(\kappa(s),X\textcolor{blue}{_\D}(s)\big)+\textcolor{blue}{\tilde{R}_{\s_{\D}}(t,X_{\D}(t),\bar{X}_{\D}(t))}
\bigg|^{2}ds.
\end{aligned}
\end{equation*}
By the Young inequality $2ab\leq \varepsilon a^{2}+b^{2}/\varepsilon$ for any $a,b\geq 0$ and  $\varepsilon$ arbitrarily, we choose $\varepsilon=\frac {\textcolor{blue}{q-\bar{q}}}{\textcolor{blue}{\bar{q}-1}}$ here.
\begin{equation*}
\begin{aligned}
&(\bar{q}-1)\Big|\sigma\big(s,y(s)\big)-\sigma_{\Delta}\big(\kappa(s),X\textcolor{blue}{_\D}(s)\big)\Big|^{2}\\
=&(\bar{q}-1)\Big|\sigma\big(s,y(s)\big)-\sigma\big(s,X\textcolor{blue}{_\D}(s)\big)+\sigma\big(s,X\textcolor{blue}{_\D}(s)\big)-\sigma_{\Delta}\big(\kappa(s),X\textcolor{blue}{_\D}(s)\big)\Big|^{2}\\
\leq&(\bar{q}-1)\bigg[\Big(1+\frac {\textcolor{blue}{q-\bar{q}}}{\textcolor{blue}{\bar{q}-1}}\Big)\Big|\sigma\big(s,y(s)\big)-\sigma\big(s,X\textcolor{blue}{_\D}(s)\big)\Big|^{2}\\
\quad&+\Big(1+\frac {\textcolor{blue}{\bar{q}-1}}{\textcolor{blue}{q-\bar{q}}}\Big)\Big|\sigma\big(s,X\textcolor{blue}{_\D}(s)\big)-\sigma_{\Delta}\big(\kappa(s),X\textcolor{blue}{_\D}(s)\big)\Big|^{2}\bigg]\\
=&\textcolor{blue}{(q-1)}\Big|\sigma\big(s,y(s)\big)-\sigma\big(s,X\textcolor{blue}{_\D}(s)\big)\Big|^{2}+\frac{(\bar{q}-1)(q-1)}{\textcolor{blue}{q-\bar{q}}}\Big|\sigma\big(s,X\textcolor{blue}{_\D}(s)\big)-\sigma_{\Delta}\big(\kappa(s),X\textcolor{blue}{_\D}(s)\big)\Big|^{2}.
\end{aligned}
\end{equation*}
Then\\
\begin{equation}\label{thzheng2}
\EE|e(t\land \theta_n)|^{\bar{q}}=J_1+J_2+J_3,
\end{equation}
where
\begin{equation*}
J_1=\EE\int_{t_0}^{t\land\theta_n}\bar{q}\big|e(s)\big|^{\bar{q}-2}\bigg(\Big< e(s),\mu\big(s,y(s)\big)-\mu\big(s,X\textcolor{blue}{_\D}(s)\big)\Big>+\textcolor{blue}{(q-1)}\Big|\sigma\big(s,y(s)\big)-\sigma\big(s,X\textcolor{blue}{_\D}(s)\big)\Big|^{2}\bigg)ds,
\end{equation*}
\begin{equation*}
\begin{aligned}
J_2=&
\EE\int_{t_0}^{t\land\theta_n}\bar{q}\big|e(s)\big|^{\bar{q}-2}\bigg(\Big< e(s),\mu\big(s,X\textcolor{blue}{_\D}(s)\big)-\mu_{\Delta}\big(\k(s),\bar{X}\textcolor{blue}{_\D}(s)\big)\Big>\\
&+\textcolor{blue}{\frac{(\bar{q}-1)(q-1)}{q-\bar{q}}}\Big|\sigma\big(s,X\textcolor{blue}{_\D}(s)\big)-\sigma_{\Delta}\big(\kappa(s),X\textcolor{blue}{_\D}(s)\big)\Big|^{2}\bigg)ds,
\end{aligned}
\end{equation*}
\begin{equation*}
J_3\leq \EE\int_{t_0}^{t\land\theta_n}\bar{q}(\bar{q}-1)\big|e(s)\big|^{\bar{q}-2}\big|\textcolor{blue}{\tilde{R}_{\s_{\D}}(t,X_{\D}(t),\bar{X}_{\D}(t))}\big|^{2}ds.
\end{equation*}
By Assumption \ref{A2}, we have
\begin{equation}\label{th34zheng3}
J_1\leq H_1\EE\int_{t_0}^{t\land\theta_n}\big|e(s)\big|^{\bar{q}}ds.
\end{equation}
Rearranging $J_2$, we get
\begin{equation}\label{th34zheng4}
\begin{aligned}
J_2\leq& \EE\int_{t_0}^{t\land\theta_n}\bar{q}\big|e(s)\big|^{\bar{q}-2}\bigg(\Big<e(s),\mu\big(s,X\textcolor{blue}{_\D}(s)\big)-\mu\big(\kappa(s),X\textcolor{blue}{_\D}(s)\big)\Big>\\
\quad&+\textcolor{blue}{\frac{2(\bar{q}-1)(q-1)}{q-\bar{q}}}\Big|\sigma\big(s,X\textcolor{blue}{_\D}(s)\big)-\sigma\big(\kappa(s),X\textcolor{blue}{_\D}(s)\big)\Big|^{2}\bigg)ds\\  
\quad&+\EE\int_{t_0}^{t\land\theta_n}\bar{q}\big|e(s)\big|^{\bar{q}-2}\bigg(\Big<e(s),\mu\big(\kappa(s),X\textcolor{blue}{_\D}(s)\big)-\mu_{\Delta}\big(\kappa(s),\bar{X}\textcolor{blue}{_\D}(s)\big)\Big>\\ 
\quad&+\textcolor{blue}{\frac{2(\bar{q}-1)(q-1)}{q-\bar{q}}}\Big|\sigma\big(\kappa(s),X\textcolor{blue}{_\D}(s)\big)-\sigma_{\Delta}\big(\kappa(s),X\textcolor{blue}{_\D}(s)\big)\Big|^{2}\bigg)ds\\
=:&J_{21}+J_{22}.
\end{aligned}
\end{equation}
We estimate $J_{21}$ first. Appling the Young inequality $a^{p-2}b^{2}\leq (p-2)a^p/p+2b^{\textcolor{blue}{p}}/p$ for any $a,b\geq0$ and $t_0\leq t\land \theta_n\leq t\leq T$, we obtain
\begin{equation}\label{th34zheng5}
\begin{aligned}
J_{21}\leq& \EE\int_{t_0}^{t\land \theta_n}\bar{q}\big|e(s)\big|^{\bar{q}-2}\bigg(\frac{1}{2}\big|e(s)\big|^{2}+\frac{1}{2}\Big|\mu\big(s,X\textcolor{blue}{_\D}(s)\big)-\mu\big(\kappa(s),X\textcolor{blue}{_\D}(s)\big)\Big|^{2}\\ 
&+\textcolor{blue}{\frac{2(\bar{q}-1)(q-1)}{q-\bar{q}}}\Big|\sigma\big(s,X\textcolor{blue}{_\D}(s)\big)-\sigma\big(\kappa(s),X\textcolor{blue}{_\D}(s)\big)\Big|^{2}\bigg)ds\\ \
\leq& H_2\bigg(\EE\int_{t_0}^{t\land\theta_n}\big|e(s)\big|^{\bar{q}}ds+\EE\int_{t_0}^{t\land\theta_n}\Big|\mu\big(s,X\textcolor{blue}{_\D}(s)\big)-\mu\big(\kappa(s),X\textcolor{blue}{_\D}(s)\big)\Big|^{\bar{q}}ds\\ 
&+\EE\int_{t_0}^{t\land\theta_n}\Big|\sigma\big(s,X\textcolor{blue}{_\D}(s)\big)-\sigma\big(\kappa(s),X\textcolor{blue}{_\D}(s)\big)\Big|^{\bar{q}}ds\bigg)\\  
\leq&H_2\bigg(\EE\int_{t_0}^{t\land\theta_n}\big|e(s)\big|^{\bar{q}}ds+2C_4\EE\int_{t_0}^{\textcolor{blue}{T}}\Big(1+\big|X\textcolor{blue}{_\D}(s)\big|^{(1+\beta)\bar{q}}\Big)\Delta^{\a\bar{q}}ds\bigg)\\
\leq&\textcolor{blue}{H_2\bigg(\EE\int_{t_0}^{t\land\theta_n}\big|e(s)\big|^{\bar{q}}ds+\int_{t_0}^{\textcolor{blue}{T}}\Big(1+\EE\big|X\textcolor{blue}{_\D}(s)\big|^{(1+\beta)\bar{q}}\Big)\Delta^{\a\bar{q}}ds\bigg)}\\
\leq&H_2\bigg(\EE\int_{t_0}^{t\land\theta_n}\big|e(s)\big|^{\bar{q}}ds+\Delta^{\alpha\bar{q}}\bigg),
\end{aligned}
\end{equation}
where the Assumption \textcolor{blue}{\ref{A1}} and Lemma \ref{shuzhijieYJ} are used. Rearranging $J_{22}$ shows that
\begin{equation}\label{J22guji}
\begin{aligned}
J_{22}
\textcolor{blue}{=}&\EE\int_{t_0}^{t\land\theta_n}\bar{q}|e(s)|^{\bar{q}-2}\Big<e(s),\mu\big(\kappa(s),X\textcolor{blue}{_\D}(s)\big)-\mu\big(\kappa(s),\bar{X}\textcolor{blue}{_\D}(s)\big)\Big>ds\\
&+\EE\int_{t_0}^{t\land\theta_n}\bar{q}\big|e(s)\big|^{\bar{q}-2}\bigg(\Big<e(s),\mu\big(\kappa(s),\bar{X}\textcolor{blue}{_\D}(s)\big)-\mu_{\Delta}\big(\kappa(s),\bar{X}\textcolor{blue}{_\D}(s)\big)\Big>\\
&+\textcolor{blue}{\frac{2(\bar{q}-1)(q-1)}{q-\bar{q}}}\Big|\sigma\big(\kappa(s),X\textcolor{blue}{_\D}(s)\big)-\sigma_{\Delta}\big(\kappa(s),X\textcolor{blue}{_\D}(s)\big)\Big|^{2}\bigg)ds\\
=&I_1+I_2, 
\end{aligned}
\end{equation}
where
\begin{equation*}
\begin{aligned}
I_1=&\EE\int_{t_0}^{t\land\theta_n}\bar{q}\big|e(s)\big|^{\bar{q}-2}\Big<e(s),\mu\big(\kappa(s),X\textcolor{blue}{_\D}(s)\big)-\mu\big(\kappa(s),\bar{X}\textcolor{blue}{_\D}(s)\big)\Big>ds,\\
I_2=&\EE\int_{t_0}^{t\land\theta_n}\bar{q}\big|e(s)\big|^{\bar{q}-2}\bigg(\Big<e(s),\mu\big(\kappa(s),\bar{X}\textcolor{blue}{_\D}(s)\big)-\mu_{\Delta}\big(\kappa(s),\bar{X}\textcolor{blue}{_\D}(s)\big)\Big>\\
&+\textcolor{blue}{\frac{2(\bar{q}-1)(q-1)}{q-\bar{q}}}\Big|\sigma\big(\kappa(s),X\textcolor{blue}{_\D}(s)\big)-\sigma_{\Delta}\big(\kappa(s),X\textcolor{blue}{_\D}(s)\big)\Big|^{2}\bigg)ds.
\end{aligned}
\end{equation*}
We can derive from the Young inequality and \textcolor{blue}{\eqref{phi}} that
\begin{equation}\label{th34zheng}
\begin{aligned}
I_1\textcolor{blue}{=}& \EE\int_{t_0}^{t\land\theta_n}\bar{q}\big|e(s)\big|^{\bar{q}-2}\Big<e(s),\textcolor{blue}{\mu'(\kappa(s),x)\big|_{x=\bar{X}_{\D}(s)}}\int_{t_0}^{s}\s_{\D}\big(\kappa(s_1),\bar{X}\textcolor{blue}{_\D}(s_1)\big)dB(s_1)+\textcolor{blue}{\tilde{R}_{\mu}(t,X_{\D}(t),\bar{X}_{\D}(t))}\Big>ds\\ 
\leq&H_{21}\EE\int_{t_0}^{t\land\theta_n}\bigg(\big|e(s)\big|^{\bar{q}}+\Big|e(s)^{T}\textcolor{blue}{\mu'(\kappa(s),x)\big|_{x=\bar{X}_{\D}(s)}}\int_{t_0}^{s}\s_{\D}\big(\kappa(s_1),\bar{X}\textcolor{blue}{_\D}(s_1)\big)dB(s_1)\Big|^{\frac{\bar{q}}{2}}\\
&+\big|e(s)^{T}\textcolor{blue}{\tilde{R}_{\mu}(t,X_{\D}(t),\bar{X}_{\D}(t))}\big|^{\frac{\bar{q}}{2}}\bigg)ds\\
\leq&\bigg(H_{21}\EE\int_{t_0}^{t\land\theta_n}\Big(\big|e(s)\big|^{\bar{q}}+\big|\textcolor{blue}{\tilde{R}_{\mu}(t,X_{\D}(t),\bar{X}_{\D}(t))}\big|^{\bar{q}}\Big)ds\textcolor{blue}{+I_{11}}\bigg),
\end{aligned}
\end{equation}
where 
\begin{equation*}
I_{11}:=\EE\int_{t_0}^{t\land\theta_n}\Big|e(s)^{T}\textcolor{blue}{\mu'(\kappa(s),x)\big|_{x=\bar{X}_{\D}(s)}}\int_{t_0}^{s}\s_{\D}\big(\kappa(s_1),\bar{X}\textcolor{blue}{_\D}(s_1)\big)dB(s_1)\Big|^{\frac{\bar{q}}{2}}ds.
\end{equation*}
Following a very similar approach used for (3.35) in \cite{WG2013}, we get 
\begin{equation}\label{th34zheng6}
I_{11}\leq H_{21}\Delta^{\bar{q}}.
\end{equation}
Combining \eqref{th34zheng}, \eqref{th34zheng6} and Lemma \ref{lemma33}, we obtain
\begin{equation}\label{th34zheng7}
\begin{aligned}
I_1\leq&\textcolor{blue}{H_{21}\bigg(\EE\int_{t_0}^{t\land\theta_n}\big|e(s)\big|^{\bar{q}}ds+\EE\int_{t_0}^{T}\big|\tilde{R}_{\mu}(t,X_{\D}(t),\bar{X}_{\D}(t))\big|^{\bar{q}}ds+\Delta^{\bar{q}}\bigg)}\\
\leq&\textcolor{blue}{H_{21}\bigg(\EE\int_{t_0}^{t\land\theta_n}\big|e(s)\big|^{\bar{q}}ds+\int_{t_0}^{T}\EE\big|\tilde{R}_{\mu}(t,X_{\D}(t),\bar{X}_{\D}(t))\big|^{\bar{q}}ds+\Delta^{\bar{q}}\bigg)}\\ \leq&H_{21}\bigg(\EE\int_{t_0}^{t\land\theta_n}\big|e(s)\big|^{\bar{q}}ds+\Delta^{\bar{q}}\big(h(\Delta)\big)^{2\bar{q}}+\Delta^{\bar{q}}\bigg).
\end{aligned}
\end{equation}
And applying the Young inequality and Assumption \ref{A1}, we can show that
\begin{equation*}
\begin{aligned}
I_2\leq& H_{22}\Bigg(\EE\int_{t_0}^{t\land\theta_n}\big|e(s)\big|^{\bar{q}}ds+\EE\int_{t_0}^{t\land\theta_n}\bigg(\Big|\mu\big(\kappa(s),\bar{X}\textcolor{blue}{_\D}(s)\big)-\mu_{\Delta}\big(\kappa(s),\bar{X}\textcolor{blue}{_\D}(s)\big)\Big|^{\bar{q}}\\
&+\Big|\sigma\big(\kappa(s),X\textcolor{blue}{_\D}(s)\big)-\sigma_{\Delta}\big(\kappa(s),X\textcolor{blue}{_\D}(s)\big)\Big|^{\bar{q}}\bigg)ds\Bigg)\\
\leq& H_{22}\Bigg(\EE\int_{t_0}^{t\land\theta_n}\big|e(s)\big|^{\bar{q}}ds+\EE\int_{t_0}^{t\land\theta_n}\bigg(1+\Big|\bar{X}\textcolor{blue}{_\D}(s)\Big|^{\beta\bar{q}}+\Big|\big|\bar{X}\textcolor{blue}{_\D}(s)\big|\land f^{-1}\big(h(\Delta)\big)\Big|^{\beta\bar{q}}\bigg)\\
&\times\Big|\bar{X}\textcolor{blue}{_\D}(s)-\Big(\big|\bar{X}\textcolor{blue}{_\D}(s)\big|\land f^{-1}\big(h(\Delta)\big)\Big)\frac{\bar{X}\textcolor{blue}{_\D}(s)}{|\bar{X}\textcolor{blue}{_\D}(s)|}\Big|^{\bar{q}} ds+\EE\int_{t_0}^{t\land\theta_n}\bigg(1+\Big|X\textcolor{blue}{_\D}(s)\Big|^{\beta\bar{q}}\\
&+\Big|\big|X\textcolor{blue}{_\D}(s)\big|\land f^{-1}\big(h(\Delta)\big)\Big|^{\beta\bar{q}}\bigg)\Big|X\textcolor{blue}{_\D}(s)-\Big(\big|X\textcolor{blue}{_\D}(s)\big|\land f^{-1}\big(h(\Delta)\big)\Big)\frac{X\textcolor{blue}{_\D}(s)}{|X\textcolor{blue}{_\D}(s)|}\Big|^{\bar{q}}  ds\Bigg)\\
\leq&H_{22}\Bigg(\EE\int_{t_0}^{t\land\theta_n}\big|e(s)\big|^{\bar{q}}ds+\int_{t_0}^{\textcolor{blue}{T}}\Bigg(\EE\bigg[1+\big|\bar{X}\textcolor{blue}{_\D}(s)\big|^{p}+\Big|\big|\bar{X}\textcolor{blue}{_\D}(s)\big|\land f^{-1}\big(h(\Delta)\big)\Big|^{p}\bigg]\Bigg)^{\frac{\be \bar{q}}{p}}\\
&\times\bigg[\EE\Big|\bar{X}\textcolor{blue}{_\D}(s)-\Big(\big|\bar{X}\textcolor{blue}{_\D}(s)\big|\land f^{-1}\big(h(\Delta)\big)\Big)\frac{\bar{X}\textcolor{blue}{_\D}(s)}{|\bar{X}\textcolor{blue}{_\D}(s)|}\Big| ^{\frac{p\bar{q}}{p-\beta \bar{q}}}\bigg]^{\frac{p-\be \bar{q}}{p}}ds+\int_{t_0}^{\textcolor{blue}{T}}\Bigg(\EE\bigg[1+\big|X\textcolor{blue}{_\D}(s)\big|^{p}\\
&+\Big|\big|X\textcolor{blue}{_\D}(s)\big|\land f^{-1}\big(h(\Delta)\big)\Big|^{p}\bigg]
\Bigg)^{\frac{\be\bar{q}}{p}}\bigg[\EE\Big|X\textcolor{blue}{_\D}(s)-\big|X\textcolor{blue}{_\D}(s)\big|\land f^{-1}\big(h(\Delta)\big)\Big|^{\frac{p\bar{q}}{p-\beta \bar{q}}}\bigg]^{\frac{p-\beta \bar{q}}{p}}ds\Bigg)\\
\leq&H_{22}\Bigg(\EE\int_{t_0}^{t\land\theta_n}\big|e(s)\big|^{\bar{q}}ds+\int_{t_0}^{\textcolor{blue}{T}}\bigg(\EE\Big|I\big\{\textcolor{blue}{\big|}\bar{X}\textcolor{blue}{_\D}(s)\textcolor{blue}{\big|}>f^{-1}\big(h(\Delta)\big)\big\}\big|\bar{X}\textcolor{blue}{_\D}(s)\big|^{\frac{p\bar{q}}{p-\beta \bar{q}}}\Big|\bigg)^{\frac{p-\beta \bar{q}}{p}}ds\\
&+\int_{t_0}^{\textcolor{blue}{T}}\bigg(\EE\Big|I\big\{\textcolor{blue}{\big|}X\textcolor{blue}{_\D}(s)\textcolor{blue}{\big|}>f^{-1}\big(h(\Delta)\big)\big\}\big|X\textcolor{blue}{_\D}(s)\big|^{\frac{p\bar{q}}{p-\beta \bar{q}}}\Big|\bigg)^{\frac{p-\beta \bar{q}}{p}}ds\Bigg),
\end{aligned}
\end{equation*}
where the H\"older inequality and Lemma \ref{shuzhijieYJ} are used above, and using the Chebyshev inequality yields
\begin{equation}\label{th34zheng8}
\begin{aligned}
I_2\leq&
H_{22}\Bigg(\EE\int_{t_0}^{t\land\theta_n}\big|e(s)\big|^{\bar{q}}ds+\int_{t_0}^{\textcolor{blue}{T}}\bigg(\Big[P\Big\{\big|\bar{X}\textcolor{blue}{_\D}(s)\big|>f^{-1}\big(h(\Delta)\big)\Big\}\Big]^{\frac{p-\beta \bar{q}-\bar{q}}{p-\beta \bar{q}}}\Big[\EE\big|\bar{X}\textcolor{blue}{_\D}(s)\big|^{p}\Big]^{\frac{\bar{q}}{p-\beta \bar{q}}}\bigg)^{\frac{p-\beta \bar{q}}{p}}ds\\  
+&\int_{t_0}^{\textcolor{blue}{T}}\bigg(\bigg[P\Big\{\big|X\textcolor{blue}{_\D}(s)\big|>f^{-1}\big(h(\Delta)\big)\Big\}\bigg]^{\frac{p-\beta \bar{q}-\bar{q}}{p-\beta \bar{q}}}\Big[\EE\big|X\textcolor{blue}{_\D}(s)\big|^{p}\Big]^{\frac{\bar{q}}{p-\beta \bar{q}}}\bigg)^{\frac{p-\beta \bar{q}}{p}}ds\Bigg)\\  
\leq&H_{22}\Bigg(\EE\int_{t_0}^{t\land\theta_n}\big|e(s)\big|^{\bar{q}}ds+\int_{t_0}^{\textcolor{blue}{T}}\bigg(\frac{\EE\big|\bar{X}\textcolor{blue}{_\D}(s)\big|^{p}}{\big|f^{-1}\big(h(\Delta)\big)\big|^{p}}\bigg)^{\frac{p-\beta \bar{q}-\bar{q}}{p}}ds+\int_{t_0}^{\textcolor{blue}{T}}\bigg(\frac{\EE\big|X\textcolor{blue}{_\D}(s)\big|^{p}}{\big|f^{-1}\big(h(\Delta)\big)\big|^{p}}\bigg)^{\frac{p-\beta \bar{q}-\bar{q}}{p}}ds\Bigg)\\
\leq &H_{22}\bigg(\EE\int_{t_0}^{t\land\theta_n}\big|e(s)\big|^{\bar{q}}ds+\Big(f^{-1}\big(h(\Delta)\big)\Big)^{(\beta+1)\bar{q}-p}\bigg).
\end{aligned}
\end{equation}
Substituting \eqref{th34zheng7} and \eqref{th34zheng8} into \eqref{J22guji} gives
\begin{equation}\label{J22gujizuizhong}
J_{22}\leq H_{2}\bigg(\EE\int_{t_0}^{t\land\theta_n}\big|e(s)\big|^{\bar{q}}ds+\Big(f^{-1}\big(h(\Delta)\big)\Big)^{(\beta+1)\bar{q}-p}+\Delta^{q}\big(h(\Delta)\big)^{2\bar{q}}+\D^{\bar{q}}\bigg).
\end{equation}
Due to the Young inequality and Lemma \ref{lemma33}, we derive that 
\begin{equation}\label{J3guji}
\begin{split}
J_3\leq& H_{3}\EE\int_{t_0}^{t\land\theta_n}\bigg(\big|e(s)\big|^{\bar{q}}+\big|\textcolor{blue}{\tilde{R}_{\s_{\D}}(t,X_{\D}(t),\bar{X}_{\D}(t))}\big|^{\bar{q}}\bigg)ds\\
\leq& H_{3}\bigg(\EE\int_{t_0}^{t\land\theta_n}\big|e(s)\big|^{\bar{q}}ds+\int_{t_0}^{\textcolor{blue}{T}}\EE|\textcolor{blue}{\tilde{R}_{\s_{\D}}(t,X_{\D}(t),\bar{X}_{\D}(t))}\big|^{\bar{q}}ds\bigg)\\
\leq&H_{3}\bigg(\EE\int_{t_0}^{t\land\theta_n}\big|e(s)\big|^{\bar{q}}ds+\Delta^{\bar{q}}\big(h(\Delta)\big)^{2\bar{q}}\bigg),
\end{split}
\end{equation}
where $H_{21},$ $H_{22}$, $H_3$ and  the following $H$ are generic constants independent of $\Delta$ that may change from line to line.\\
Combining \eqref{thzheng2}, \eqref{th34zheng3}, \eqref{th34zheng4}, \eqref{th34zheng5}, \eqref{J22gujizuizhong} and \eqref{J3guji} together, we can see that
\begin{equation*}
\begin{split}
\EE|e(t\land \theta_n)|^{\bar{q}}\leq& H\bigg(\EE\int_{t_0}^{t\land\theta_n}\big|e(s)\big|^{\bar{q}}ds+\Delta^{\alpha \bar{q}}+\Delta^{\bar{q}}\big(h(\Delta)\big)^{2\bar{q}}+\Delta^{\bar{q}}+\Big(f^{-1}\big(h(\Delta)\big)\Big)^{(\beta+1)\bar{q}-p}\bigg)\\
\leq&\textcolor{blue}{H\bigg(\int_{t_0}^t\sup\limits_{t_0\leq u\leq s}\EE \big|e(u\land\theta_n)\big|^{\bar{q}}ds+\Delta^{\alpha \bar{q}}+\Delta^{\bar{q}}\big(h(\Delta)\big)^{2\bar{q}}+\Delta^{\bar{q}}+\Big(f^{-1}\big(h(\Delta)\big)\Big)^{(\beta+1)\bar{q}-p}\bigg).}
\end{split}
\end{equation*}
An application of the Gronwall inequality yields that 
\begin{equation*}
\textcolor{blue}{\sup\limits_{t_0\leq r\leq T}\EE|e(r\land \theta_n)|^{\bar{q}}}\leq H\Big(\Delta^{\alpha \bar{q}}+\Delta^{\bar{q}}\big(h(\Delta)\big)^{2\bar{q}}+\Big(f^{-1}\big(h(\Delta)\big)\Big)^{(\beta+1)\bar{q}-p}\Big).
\end{equation*}
\textcolor{blue}{Due to the existence and uniqueness of the global solution to SDE \eqref{SDE} in $[t_0,T]$, we have $T\land\theta_n\to T$ as $n\to\infty$ (see, for example, the proof of Lemma 2.3.2 in \cite{Mao2007}, where the similar argument was used).}
Using Fatou Lemma and letting $n\rightarrow \infty $, the desired assertion is obtained.
\eproof
\end{proof}

\section{Randomized Truncated Milstein method}
\textcolor{blue}{To define the randomized truncated Milstein method, let $(\tau_j)_{j\in\mathbb{N}}$ be a i.i.d  family of $\mathcal{U}(0,1)$-distributed random variables on a filtered probabilty space $(\omega_{\tau},\mathcal{F}^{\tau},(\mathcal{F}_j^{\tau})_{j\in{\mathbb{N}}},\mathbb{P}_{\tau})$, the space is generated by $\{\tau_1,...,\tau_j\}$. Besides we define $\mathcal{U}(0,1)$ as a unique distribution on the interval $(0,1)$. Furthermore, $(\tau_j)_{j\in\mathbb{N}}$ is assumed to be  independent of the randomness in SDE \eqref{SDE}.}\par
As already observed in Remark \ref{mainrmk}, the convergence rate of the truncated Milstein method is dominated by the H\"older index $\alpha$. The purpose of this section is to propose some new method to improve the convergence rate. 
\par
Inspired by \cite{KW2019}, we embed the randomized time step into \eqref{discreteMilstein} and propose the following randomized truncated Milstein method.\par
\textcolor{blue}{Given  a step-size $\D\in(0,1),$ the randomized truncated Milstein numerical solution $X_{i+1}$ to approximate of SDE \eqref{SDE} for $t_i=i{\D}$ is given by the recursion}
\begin{equation*}
X^{\tau}_{i+1}=X_i+\tau_{i}\D\mu_{\D}(t_i,X_i)+\s_{\D}(t_i,X_i)\textcolor{blue}{\big(B(t_i+\tau_i\D)-B(t_i)\big)},
\end{equation*}
\begin{equation*}
X_{i+1}=X_i+\D\mu_{\D}(t_i+\tau_{i}\D,X_{i+1}^{\tau})+\s_{\D}(t_i,X_i)\D B_i+\frac{1}{2}\sum_{l=1}^d\s_{\D}^l(t_i,X_i)G_{\D}^l(t_i,X_i)(\D B_i^2-\D),
\end{equation*}
\textcolor{blue}{where $X_0=y_0,\;t_{N+1}=T$ for $N$ is the integer part of $T/{\D}$ and  $\D B_i=B(t_{i+1}-B(t_i))$ for $i\in\{0,1,...,N\}$}.
\par
Based on \cite{KW2019}, we have the following conjecture on the convergence rate. Briefly speaking, with the employment of the randomized technique the convergence is improved from $\min(1-2\e, \alpha)$ to $\min(1-2\e, \alpha+1/2)$.
\par
Since we have still been working on the proof of it, we will demonstrate this conjecture by using numerical simulation in the next section.

\begin{conj}\label{theconj}
Suppose Assumptions \ref{A1}, \ref{A2} and \ref{A3}
hold for any $p>2$, then for any $\bar{q}>0,\varepsilon\in(0,1/4)$ and $\D\in(0,1],$ 
	\begin{equation*}
	\textcolor{blue}{\sup\limits_{t_0\leq t\leq T}\EE|y(t)-X\textcolor{blue}{_\D}(t)|^{\bar{q}}}\leq H\left(\D^{min(1-2\e,\alpha+\frac{1}{2})\bar{q}}\right),
	\end{equation*}
{\color{blue} where $H$ is a constant independent from $\Delta$.}
\end{conj}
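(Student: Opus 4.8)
To substantiate Conjecture~\ref{theconj}, the plan is to keep the architecture of the proof of Theorem~\ref{theo34} — the It\^o energy estimate on $|e(t\wedge\theta_n)|^{\bar q}$ with $e(t)=y(t)-X_\D(t)$, controlled through Assumption~\ref{A2} and a stopping time, which is what neutralises the super-linearity — and to upgrade only the single step where the time-irregularity of the drift enters: instead of the crude pathwise bound that produced the $\D^{\a\bar q}$ term there, the randomized node is used to turn the accumulated one-step time-quadrature errors into a discrete martingale, gaining an extra $\D^{1/2}$ exactly as in \cite{KW2019}. I would start from the continuous-time interpolation $X_\D(t)$ of the predictor--corrector recursion (the analogue of \eqref{continuousvertion} with drift integrand $\mu_\D(\kappa(s)+\tau_i\D,\bar{X}^{\tau}_{\D}(s))$ on $[t_i,t_{i+1})$ and $\bar{X}^{\tau}_{\D}(s)=X^\tau_{i+1}$ there), and first establish the randomized analogues of Lemmas~\ref{shuzhijieYJ}, \ref{lemma31} and \ref{lemma33}: uniform $p$-th moment bounds for $X_i$ and for the predictor $X^\tau_{i+1}$ (using Assumption~\ref{A3}, \eqref{sigmamuGDelta}, $\EE|B(t_i+\tau_i\D)-B(t_i)|^p\le C\D^{p/2}$, and $\tau_i\perp\D B_i$), and the $O(\D^{\bar q}(h(\D))^{2\bar q})$ control of the Taylor remainders of \eqref{phi}--\eqref{secthree2}; under $f(u)=H_4u^{\be+2}$ and $h(\D)=\D^{-\e}$ these, together with the truncation tail $(f^{-1}(h(\D)))^{(\be+1)\bar q-p}$, are $\le\D^{(1-2\e)\bar q}$ once $p$ is large.

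Applying the It\^o formula and splitting the drift increment $\mu(s,y(s))-\mu_\D(\kappa(s)+\tau_i\D,\bar{X}^{\tau}_{\D}(s))$ as in Theorem~\ref{theo34}, the spatial part $\mu(s,y(s))-\mu(s,X_\D(s))$ is absorbed together with the diffusion difference via Assumption~\ref{A2} (same Young splitting with $\e=(q-\bar q)/(\bar q-1)$), the interpolation, predictor and truncation contributions are treated as $I_1,I_2,J_3$ were (Taylor expansion, Assumption~\ref{A1}, the moment bounds, \eqref{th34zheng8}), and the residual drift time-irregularity piece is, after freezing the state at the left endpoint, built from
\begin{equation*}
Q_i:=\D\,\mu(t_i+\tau_i\D,X_i)-\int_{t_i}^{t_{i+1}}\mu(u,X_i)\,du .
\end{equation*}
Because $X_i$ is $\mathcal G_i$-measurable for $\mathcal G_i:=\mathcal F^{B}_{t_i}\vee\sigma(\tau_0,\dots,\tau_{i-1})$ and $\tau_i\sim\mathcal U(0,1)$ is independent of $\mathcal G_i$, one has $\EE[Q_i\mid\mathcal G_i]=0$, while the time-H\"older bound in Assumption~\ref{A1} gives $\EE|Q_i|^2\le C\D^{2+2\a}(1+\EE|X_i|^{2(\be+1)})$. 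Freezing the slowly varying weight $|e(s)|^{\bar q-2}e(s)$ at $t_i$ (legitimate since $e(t_i)$ is $\mathcal G_i$-measurable) turns the offending contribution into $\sum_i\langle|e(t_i)|^{\bar q-2}e(t_i),-\bar q\,Q_i\rangle$ up to one partial interval; since $|e(t_i)|^{\bar q-2}e(t_i)$ is $\mathcal G_i$-measurable and $\EE[Q_i\mid\mathcal G_i]=0$, this is a sum of martingale differences for $(\mathcal G_i)$, so a discrete Burkholder--Davis--Gundy inequality together with the moment bounds, $\EE|Q_i|^2\le C\D^{2+2\a}$ and $N\le T/\D$ bounds its expectation by $C\D^{\a+1/2}\big(\sup_i\EE|e(t_i\wedge\theta_n)|^{2\bar q-2}\big)^{1/2}$, which by a final Young inequality splits into a term feeding the Gronwall loop and $C\D^{(\a+1/2)\bar q}$. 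Collecting everything, applying the Gronwall inequality and letting $n\to\infty$ by Fatou's lemma would yield $\sup_{t_0\le t\le T}\EE|y(t)-X_\D(t)|^{\bar q}\le H\D^{\min(1-2\e,\a+1/2)\bar q}$.

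\emph{Main obstacles.} The first difficulty is precisely the freezing step: replacing $X_\D(s)$ by $X_i$ inside the time-irregularity piece introduces spatial errors $\mu(s,X_\D(s))-\mu(s,X_i)$ and $\mu(\kappa(s)+\tau_i\D,X_\D(s))-\mu(\kappa(s)+\tau_i\D,X_i)$ which are individually only $O(\sqrt\D\,h(\D))$; they very nearly cancel (their difference involves $\mu'$ at two times $O(\D)$ apart), but extracting this cancellation without a crude pathwise bound — which would reinstate an $\D^{\a}$-type loss — is delicate, and must be carried out while $X_\D(s)$ itself depends on $\tau_i$ through the predictor. Relatedly, disentangling the randomization from the Brownian motion in the predictor--corrector recursion is nontrivial: $\tau_i$ enters the corrector drift time, the predictor drift weight and the predictor increment $B(t_i+\tau_i\D)-B(t_i)$, so the predictor's stochastic-integral error is conditionally centred only under a different conditioning and with non-predictable coefficients, and the filtration must be chosen so that all these martingale arguments coexist with the stopping time $\theta_n$ and the super-linear factor $(1+|y|^{\be+1})$ in Assumption~\ref{A1}. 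Finally, the scheme as defined leaves the diffusion time argument un-randomized, so the term $|\sigma(s,X_\D(s))-\sigma(\kappa(s),X_\D(s))|^2$ in the energy estimate still costs $\D^{\a\bar q}$ (as in $J_{21}$ of Theorem~\ref{theo34}); reaching the claimed exponent $\a+1/2$ therefore appears to require either time-regularity of $\sigma$ (as for the autonomous diffusions in Section~5) or an additional randomization of the diffusion evaluation — clarifying which is exactly what keeps the statement conjectural.
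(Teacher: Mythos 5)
The paper does not prove this statement: it is left as an open conjecture (``Since we have still been working on the proof of it, we will demonstrate this conjecture by using numerical simulation in the next section''), and the only support offered is the observed slope $0.7548$ in Figure~\ref{2}. There is therefore no proof in the paper to compare your proposal against; what can be assessed is whether your plan is viable, and you have yourself presented it as a plan with open gaps rather than a proof.

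Your route --- keeping the energy estimate of Theorem~\ref{theo34} and upgrading only the drift time-quadrature term by exploiting $\EE[Q_i\mid\mathcal G_i]=0$ together with a discrete martingale/Burkholder--Davis--Gundy argument, as in \cite{KW2019} --- is the natural one and is presumably what the authors intend. The most important part of your write-up is the last obstacle you list, and it should be promoted from a ``difficulty'' to an obstruction to the conjecture as literally stated. Under Assumption~\ref{A1} the diffusion is only $\alpha$-H\"older in time, the scheme evaluates $\sigma_\Delta$ at the un-randomized left endpoint $t_i$, and the quadratic-variation term $\big|\sigma\big(s,X_\Delta(s)\big)-\sigma\big(\kappa(s),X_\Delta(s)\big)\big|^{2}$ appearing in $J_{21}$ contributes $\Delta^{\alpha\bar q}$ to $\EE|e|^{\bar q}$ regardless of how the drift node is randomized: this term enters through the It\^o correction with a square, not linearly, so conditional centring cannot produce any cancellation there. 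Consequently the exponent $\alpha+\tfrac12$ is unreachable by this scheme under Assumption~\ref{A1} alone, and the conjecture needs an added hypothesis that $\sigma$ is $(\alpha+\tfrac12)$-H\"older (or smoother) in time. The numerical example is consistent with this reading rather than a counterexample to it: there $\sigma(t,x)=[t(1-t)]^{3/4}x$ is H\"older-$\tfrac34=\alpha+\tfrac12$ in time while only the drift has exponent $\tfrac14$, so the simulation does not test the conjecture under the stated hypotheses. Your other two obstacles (the freezing/cancellation step for the weight $|e(s)|^{\bar q-2}e(s)$, and the entanglement of $\tau_i$ with the Brownian increment in the predictor) are genuine but of the kind already resolved in the non-truncated randomized Milstein literature; the diffusion issue is the one that forces a restatement of the conjecture before any proof along your lines can close.
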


\section{Numerical examples}
The purpose of the example discussed in this section is twofold. On one side, it is used to illustrate Theorem \ref{th35}. On the other side, it demonstrates that the convergence rate in Conjecture \ref{theconj} is promising. 
\begin{expl}\label{Ex41}
	Consider the scaler SDE
	\begin{equation}
	\left\{
	\begin{array}{lr}
	dX(t)=\left(\left[t\left(1-t\right)\right]^{\frac{1}{4}}X^2(t)-X^5(t)\right)dt+\left[t\left(1-t\right)\right]^{\frac{3}{4}}X(t)dB(t),&\\
	X(t_0)=2,
	\end{array}
	\right.
	\end{equation}
where $t_0=0,$ $T=1$ and $B(t)$ is a scalar Brownian motion.
\end{expl}
For any $q>2$, $t\in[0,1]$ we can see that
\begin{equation*}
\begin{aligned}
&(x-y)\left(\mu\left(t,x\right)-\mu\left(t,y\right)\right)+\textcolor{blue}{(q-1)}\left|\sigma\left(t,x\right)-\sigma\left(t,y\right)\right|^2\\
\textcolor{blue}{=}&(x-y)^2\left(\left[t\left(1-t\right)\right]^{\frac{1}{4}}(x+y)-\left(x^4+x^3y+x^2y^2+xy^3+y^4\right)+\textcolor{blue}{(q-1)}\left[t\left(1-t\right)\right]^{\frac{3}{2}}\right).
\end{aligned}
\end{equation*}
But
\begin{equation*}
-(x^3y+xy^3)=-xy(x^2+y^2)\leq 0.5(x^2+y^2)^2=0.5(x^4+y^4)+x^2y^2.
\end{equation*}
Hence
\begin{equation*}
\begin{aligned}
&(x-y)^T\left(\mu\left(t,x\right)-\mu\left(t,y\right)\right)+\textcolor{blue}{(q-1)}\left|\sigma\left(t,x\right)-\sigma\left(t,y\right)\right|^2\\
\leq&(x-y)^2\left(\left[t\left(1-t\right)\right]^{\frac{1}{4}}(x+y)-0.5\left(x^4+y^4\right)+\textcolor{blue}{(q-1)}\left[t\left(1-t\right)\right]^{\frac{3}{2}}\right)\\
\leq&C(x-y)^2.
\end{aligned}
\end{equation*}
Under the fact that polynomials with negative coefficient for the highest order term can always be bounded, we can obtain the assertion above. It means that Assumption \ref{A2} is satisfied. 
\par
Similarly, for any $p>2$ and any $t\in[0,1]$, we have
\begin{equation*}
x^T\mu(t,x)+(p-1)|\s(t,x)|^2=\left[t\left(1-t\right)\right]^\frac{1}{4}x^3-x^6+(p-1)\left[t\left(1-t\right)\right]^{\frac{3}{2}}x^2\leq C\left(1+\left|x\right|^2\right),
\end{equation*}
which \textcolor{blue}{shows} that Assumption \ref{A3} holds.
\par
Appling the mean value theorem for the temporal variable, Assumption \ref{A1}  are satisfied with $\a=1/4$ and $\beta=4$. 
Due to the fact that 
\begin{equation*}
\sup\limits_{0<t\leq 1}\sup\limits_{|x|\leq u}\left(|\mu(t,x)|\vee|\s(t,x)|\vee|L\s(t,x)|\right)\leq 2u^5,\quad\forall u\geq1,
\end{equation*} 
Let $f(u)=2u^5$ and $h(\D)=\D^{-\e}$, for any $\e\in(0,1/4).$
Choose $\textcolor{blue}{\epsilon}$ sufficiently \textcolor{blue}{small}, we can derive from Theorem \ref{th35} that
\begin{equation*}
\textcolor{blue}{\sup\limits_{0\leq t\leq 1}\EE|y(t)-X\textcolor{blue}{_\D}(t)|^{\bar{q}}}\leq H\D^{\frac{1}{4}\bar{q}},
\end{equation*}
\begin{figure}[H]
	\centering
	\includegraphics[width=0.80\textwidth]{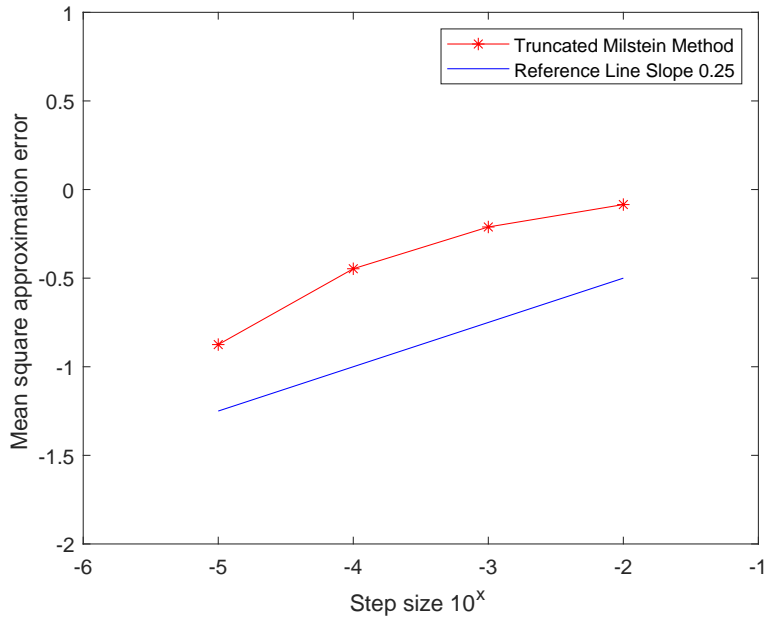}
	\caption{Convergence rate of truncated Milstein method in Example }
	\label{1}
\end{figure}
This shows that the convergence rate of truncated Milstein method for the SDE 
\eqref{Ex41} is $1/4$.
To approximate the mean square error, we run $M=1000$ independent trajectories for 5 different step sizes. And we regard the numerical solution as the step-size $10^{\textcolor{blue}{-7}}$ as the true solution for the SDE. By numerical simulation we can see in Figure \ref{1} that the slope of the error against the step sizes is about \textcolor{blue}{0.2562}. \par 
Let us turn to the discussion on the randomized truncated Milstein method.  We would expect
\begin{equation*}
\sup\limits_{0\leq t\leq 1}\EE|y(t)-{X}\textcolor{blue}{_\D}(t)|^{\bar{q}}\leq H\D^{\frac{3}{4}\bar{q}},
\end{equation*}
\begin{figure}[H]
	\centering
	\includegraphics[width=0.80\textwidth]{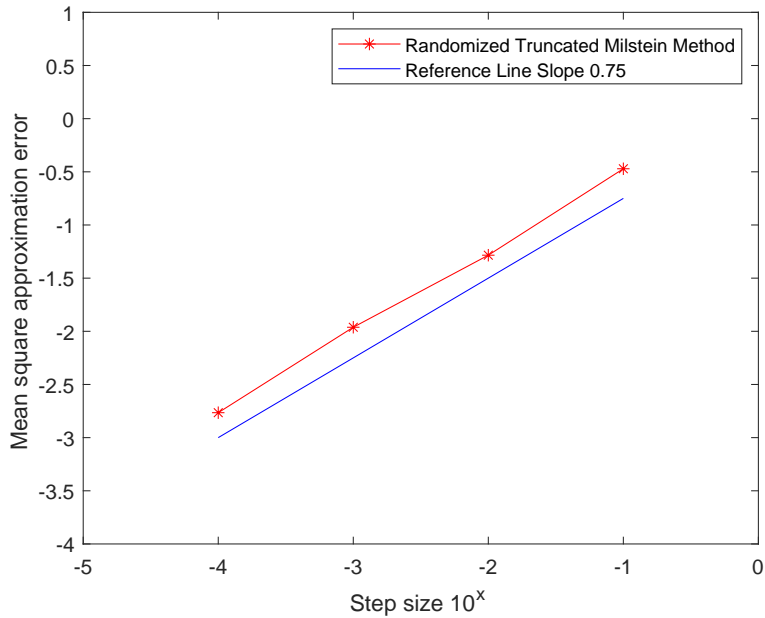}
	\caption{Convergence rate of randomized truncated Milstein method }
	\label{2}
\end{figure}
\textcolor{blue}{In numerical simulations, we use the step-size $10^{-6}$ to approximate the true solution. In Figure \ref{2}, we run M=1000 independent paths with step-sizes $10^{-4}, 10^{-3}, 10^{-2}, 10^{-1}.$ }
\par
It is clearly to see from Figure \ref{2} that the convergence rate of the randomized truncated Milstein method is indeed improved to be $0.7548$. This shows that Conjecture \ref{theconj} is reasonable.

\section{Conclusion and future research}
This paper revisited the truncated Milstein method and proved the strong convergence of the method for non-autonomous SDEs, which extended and improved the existing result.
\par
With the observation that the convergence rate could be very low due to the H\"older continuous time variable, the randomized truncated Milstein method was proposed. The conjecture on the improvement of the convergence rate is reported. Numerical simulations demonstrate the conjecture is promising.
\par
One of the main future works is to prove the conjecture. In addition, we are working on the stability of the truncated Milstein method and the randomized truncated Milstein method in different senses.

\section*{Acknowledgements}
We deeply appreciate the detailed comments and suggestions from the reviewers and the editor, which helped to improve this work significantly.
\par
The authors would like to thank the Natural Science Foundation of China (11701378, 11871343, 11971316), and Science and Technology Innovation Plan of Shanghai (20JC1414200) for their financial support.

\end{document}